\documentclass[11pt]{article}

\usepackage[margin=2.5cm]{geometry}
\usepackage[utf8]{inputenc}
\usepackage[T1]{fontenc}
\usepackage{amsmath,amsthm,amssymb, eucal}
\usepackage[backend=bibtex]{biblatex}
\usepackage{subfig}
\usepackage[tikz]{bclogo}
\addbibresource{biblio.bib}
\usepackage{bbold}
\usepackage{xcolor}
\usepackage{stmaryrd} % brackets
\usepackage{enumitem} % lists
\usepackage{lineno} % line numbering
\usepackage{hyperref} % link ref
\hypersetup{
    colorlinks = true,
    linktoc = page
}
\usepackage{mathtools}
\mathtoolsset{showonlyrefs=true}
\usepackage{placeins}
\usepackage{xcolor}
%\linenumbers  
\DeclareMathOperator*{\argmin}{argmin}

\begin{document}

%%%%%%%%% TITLE + ABSTRACT %%%%%%%%%

\title{Linear-Quadratic McKean-Vlasov Stochastic Differential Games
\thanks{This work is  supported by FiME (Finance for Energy Market Research Centre) and the ``Finance et D\'eveloppement Durable - Approches Quantitatives'' EDF - CACIB Chair.}
}

%\subtitle{not needed}

\author{
Enzo MILLER
\footnote{LPSM, University Paris Diderot,  \sf enzo.miller at polytechnique.edu}
\qquad\quad
Huy\^en PHAM
\footnote{LPSM, University  Paris Diderot and CREST-ENSAE, \sf pham at lpsm.paris}
}

%\date{}
%The correct dates will be entered by the editor.

\maketitle

%\begin{center}
 %   {\fontsize{14pt}{14pt}\selectfont \textbf{Linear-quadratic McKean-Vlasov games with random coefficients, a weak martingale approach}\par}
 %   \vspace{\baselineskip}
 %   {\fontsize{10pt}{10pt}\selectfont Autor\footnote{author@gmail.com}\par}
 %   \vspace{\baselineskip}
%\end{center}

\begin{abstract} 
We consider a multi-player stochastic differential game with linear McKean-Vlasov dynamics and quadratic cost functional depending on the variance and mean of the state and control actions of the players in open-loop form. Finite and infinite horizon problems with possibly  some random coefficients as well as common noise are addressed.  We propose a simple direct approach based on weak martingale optimality principle together with a fixed point argument in the space of controls for solving this game problem. The Nash equilibria are   characterized in terms of  systems of Riccati ordinary differential equations and 
linear mean-field backward stochastic differential equations: existence and uniqueness conditions are provided for such systems. Finally, we illustrate our results on a toy example.
\end{abstract}

\vspace{5mm}

\noindent {\bf MSC Classification}: 49N10, 49L20, 91A13. 

\vspace{5mm}

\noindent {\bf Key words}: Mean-field SDEs, stochastic differential game, linear-quadratic, open-loop controls, Nash equilibria,  weak martingale optimality principle.

%\tableofcontents
%%%%%%%%% END: TITLE + ABSTRACT %%%%%%%%%
%%%%%%%%% FUNCTIONS %%%%%%%%% 
% value
\newcommand{\vali}{\mathcal{V}\i}
\newcommand{\valspace}{\mathcal{C}_b^{1,2}([0,T ]\times \muspace[2])}

% control
\newcommand{\cbstar}{\cbold^\star}
\newcommand{\crandspace}{L^0(\Omega, \mathcal{F}_t \lor \mathcal{G},\P;\boldsymbol{A})}
\newcommand{\actspace}{L^0(\Omega, \mathcal{F}_t \lor \mathcal{G},\P;\boldsymbol{A})}
\renewcommand{\c}[1][]{\alpha #1}
\newcommand{\cbold}[1][]{\boldsymbol{\alpha}#1}
\newcommand{\cvectT}{\VT{\c_1 - \bar{\c}_1}{\c_2 - \bar{\c}_2}}
\newcommand{\cvect}{\V{\c_1 - \bar{\c}_1}{\c_2 - \bar{\c}_2}}
\newcommand{\cspace}[1][]{
\ifthenelse{\equal{#1}{}}{ \boldsymbol{\mathcal{A}}}{\mathcal{A}^{#1}}
}
% state dynamic
\newcommand{\s}{\sigma_{1,t}}
\newcommand{\st}{\tilde{\sigma}_{1,t}}
\renewcommand{\ss}{\sigma_{2,t}}
\newcommand{\sst}{\tilde{\sigma}_{2,t}}
\newcommand{\sx}{\sigma_{x,t}}
\newcommand{\sxt}{\tilde{\sigma}_{x,t}}

\renewcommand{\b}{b_{1,t}}
\newcommand{\bt}{\tilde{b}_{1,t}}
\newcommand{\bb}{b_{2,t}}
\newcommand{\bbt}{\tilde{b}_{2,t}}
\newcommand{\bx}{b_{x,t}}
\newcommand{\bxt}{\tilde{b}_{x,t}}

\newcommand{\X}[1][t]{X_#1}
\newcommand{\Xbar}{\bar{\X}}
\newcommand{\Xc}{(\X - \Xbar) }
\newcommand{\muspace}[1][]{
\ifthenelse{\equal{#1}{}}{\mathcal{P}_r(\R^m)}{\mathcal{P}_{#1}(\R^m)} }
\newcommand{\xispace}[1][]{
\ifthenelse{\equal{#1}{} }{L^r(\Omega,\mathcal{G},\P;\R^m)}{L^{#1}(\Omega,\mathcal{G},\P;\R^m)}
}

% general
\newcommand{\scal}[2]{#1.#2^{\otimes2}}
\newcommand{\cond}[1]{\E[#1|W^0]}
\newcommand{\argscond}[1][t]{(#1, X^{\cbold}_{#1}, \cond{X^{\cbold}_{#1}}, \cbold[_#1], \cond{\cbold[_#1]})}
\newcommand{\args}[1][t]{(#1, X^{\cbold}_{#1}, \EE{X^{\cbold}_{#1}}, \cbold[_#1], \EE{\cbold[_#1]})}
\renewcommand{\arg}{(t,x,\bar{x},a_1, \bar{a_1}, a_2, \bar{a_2})}
\renewcommand{\bar}{\overline}
\newcommand{\inter}{\llbracket 1,n \rrbracket}
\renewcommand{\u}{\tilde{u}}
\newcommand{\du}{\mathrm{d}u}
\newcommand{\dd}{\mathrm{d}}
\newcommand{\proofspace}{\mbox{} \\*}
\newcommand{\tspace}{[0,T]}
\newcommand{\verteq}{\rotatebox{90}{$=$}}
\newcommand{\equalto}[2]{\underset{\overset{\verteq}{#2}}{#1}}
\newcommand{\aspace}[1][]{ 
\ifthenelse{ \equal{#1}{} }{ L^0(\mathcal{F}_t \lor \mathcal{G};\boldsymbol{\cali{A}}) } { L^0(\mathcal{F}_t \lor \mathcal{G};\cali{A}^i) }  }
\newcommand{\cali}[1]{ \mathcal{#1} }
\newcommand{\be}[1]{\begin{equation} #1 \end{equation}}
\newcommand{\bec}[1]{\begin{equation} \begin{cases} #1\end{cases} \end{equation}}
\newcommand{\bes}[1]{\begin{equation} \begin{split} #1\end{split} \end{equation}}
\newcommand{\B}[1]{\boldsymbol{#1}}
\newcommand{\1}{\mathbf{1}}
\newcommand{\var}{\mathrm{Var}}
\newcommand{\E}{\mathbb{E}}
\newcommand{\entrecro}[1]{\left[ #1 \right]}
\newcommand{\EE}[1]{\E \entrecro{#1}}
\newcommand{\entrebra}[1]{\left \{  #1 \right \}}
\newcommand{\entrepar}[1]{\left(  #1 \right) }
\newcommand{\R}{\mathbb{R}}
\renewcommand{\P}{\mathbb{P}}
\newcommand{\F}{\mathbb{F}}
\newcommand{\N}{\mathbb{N}}
\renewcommand{\i}[1][]{^{i #1}}
\newcommand{\xbar}{\bar{x}}
\newcommand{\V}[2]{
\begin{pmatrix} #1 \\ #2 \end{pmatrix}}
\newcommand{\VT}[2]{
\begin{pmatrix} #1, & #2 \end{pmatrix}}

% newTheorem
\newtheorem{theorem}{Theorem}[section]
\newtheorem{hyp}{Hypothesis}
\newtheorem{corollary}[theorem]{Corollary}
\newtheorem{lemma}[theorem]{Lemma}
% \surroundwithmdframed[outerlinewidth=0.4pt,
%   innerlinewidth=0.4pt,
%   middlelinewidth=1pt,
%   middlelinecolor=white,
%   bottomline=false,topline=false,rightline=false]{lemma}
\newtheorem{defi}[theorem]{Definition}
\newtheorem{prop}[theorem]{Proposition}
\newtheorem{remark}[theorem]{Remark}
\newtheorem{csq}[theorem]{Consequence}
%%%%%%%%% END: FUNCTIONS %%%%%%%%% 

%%%%%%%%% INTRODUCTION %%%%%%%%% 

\section{Introduction} \label{secintro}

\subsection{General introduction-Motivation}

The study of large population of interacting individuals (agents, computers, firms) is a central issue in many fields of science, and finds numerous relevant applications in economics/finance (systemic risk with financial entities strongly interconnected), sociology (regulation of a crowd motion, herding behavior, social networks), physics,  biology, or electrical engineering (telecommunication). Rationality in the behavior of the population is a natural requirement, especially in social sciences, and is addressed by including individual decisions, where each individual optimizes some criterion, e.g. an investor maximizes her/his wealth, a firm  chooses  how much to produce outputs (goods, electricity, etc) or post advertising for a large population. The criterion and optimal decision of each individual depend on the others and affect the whole group, and one is then typically looking for an equilibrium among the population where the dynamics of the system evolves endogenously as a consequence of the optimal choices made by each individual. When the number of indistinguishable agents in the population tend to infinity, and by considering cooperation between the agents, we are reduced in the asymptotic formulation to a McKean-Vlasov (McKV)  control problem where the dynamics  and the cost functional depend upon the law of the stochastic process. This corresponds to a Pareto-optimum where a social planner/influencer decides of the strategies for each individual. 
The theory of  McKV  control problems, also called mean-field type control, has generated recent advances in the literature, either by the maximum principle \cite{CD15}, or the dynamic programming approach \cite{PW17}, see also  the recent books  \cite{BFY} and \cite{CD}, and the references therein, and linear quadratic (LQ) models provide an important class of solvable applications  studied in many papers, see, e.g., \cite{Yong}, \cite{HLY15},  \cite{Gr}, \cite{BP}.

In this paper, we consider multi-player stochastic differential games for McKean-Vlasov dyna\-mics. This corresponds and is motivated by the competitive  interaction of multi-population with a large number of indistinguishable agents.  In this context, we are then looking for a Nash equilibrium among the multi-class of populations.  Such problem, sometimes refereed to as mean-field-type game, allows to incorporate competition and heterogeneity in the population, and is a natural extension of McKean-Vlasov (or mean-field-type) control by including multiple decision makers. It  finds natural applications in engineering, power systems, social sciences and cybersecurity, and has  attracted recent attention in the literature, see, e.g., \cite{aurdje18}, \cite{cospha18}, \cite{djeetal18}, \cite{benetal18}.  
We focus more speci\-fically on the case of linear McKean-Vlasov dynamics and quadratic cost functional for each player (social planner). Linear Quadratic McKean-Vlasov stochastic differential game
%, refereed shortly as LQMKVSDG,  
has been studied in \cite{duntem18}  for a one-dimensional state process, and by restricting to closed-loop control. Here, we consider both finite and infinite horizon problems in a multi-dimensional framework, with random coefficients for the affine terms of the McKean-Vlasov dynamics and random coefficients for the linear terms of the cost functional. Moreover, controls of each player are in open-loop form. Our main contribution is to provide a simple and direct approach based on  weak martingale optimality principle developed in  \cite{BP} for McKean-Vlasov control problem, and that we extend  to the stochastic differential game, together with a fixed point argument in the space of open-loop controls, for finding a Nash equilibrium.  The key point is to find a suitable ansatz for determining the fixed point corresponding to the  Nash equilibria that we  characterize explicitly   in terms of  systems of Riccati ordinary differential equations and 
linear mean-field backward stochastic differential equations: existence and uniqueness conditions are provided for such systems. 
%Finally, we illustrate our results on a toy example

The rest of this paper is organized as follows. We continue Section \ref{secintro} by formulating the Nash equilibrium problem in the linear quadratic McKean-Vlasov finite horizon  framework, and by giving  
some notations and assumptions.  Section \ref{secweakmar} presents the verification lemma based on weak submartingale optimality principle for finding a Nash equilibrium, and details each step of the method to compute  a Nash equilibrium. We give some extensions in Section \ref{secexten} to the case of infinite horizon and common noise. Finally, we illustrate our results in Section \ref{secex} on some toy example.

\subsection{Problem formulation}

Let $T>0$ be a finite given horizon. Let $(\Omega, \mathcal{F},\mathbb{F} ,\mathbb{P})$ be a fixed filtered probability space where $\mathbb{F}=(\mathcal{F}_t)_{t\in[0,T]}$ is the natural filtration of a real Brownian motion $W$ $=$ $(W_t)_{t\in[0,T]}$.  In this section, for simplicity, we deal with the case of a single real-valued Brownian motion, 
and the case of multiple Brownian motions will be  addressed later in Section  \ref{secexten}.  We consider a multi-player game with $n$ players, and define the set of admissible controls for each player $i \in \inter $ as:

\begin{equation}
\mathcal{A}\i = \left\{ \c_i : \Omega \times [0,T] \to \R^{d_i}   \;  \text{s.t. \;} \c_i \text{\;is $\mathbb{F}$-adapted and} \int_0^T e^{-\rho t} \E[|\c_{i,t}|^2] dt < \infty   \right\},
\end{equation}
where $\rho$  is a nonnegative constant discount factor. We denote by $\cspace$ $=$ $\mathcal{A}^1 \times ... \times \mathcal{A}^n$, and for any  $\cbold = (\c_1, ..., \c_n) \in \cspace$, $i$ $\in$ $\inter$, we set $\alpha^{-i}$ $=$ $(\alpha_1,\ldots,\alpha_{i-1},\alpha_{i+1},\ldots,\alpha_n)$ $\in$ $\cspace[-i]$ $=$  $\cali{A}^1\times\ldots\times\cali{A}^{i-1}\times\cali{A}^{i+1}\times\ldots\times\cali{A}^n$.

\vspace{1mm}

Given a square integrable measurable random variable $X_0$ and control  $\cbold$ $=$ $(\c_1,...,\c_n)$ $\in$ $\cspace$,  we consider the controlled linear mean-field stochastic differential equation in $\R^d$:
\begin{equation}
\begin{cases}
d\X &= b \args dt + \sigma \args dW_t, \;\;\; 0 \leq t \leq T, \\
X_0^{\c} &= X_0,
\end{cases}
\label{dynamic}
\end{equation}
where for  $t \in [0,T]$, $x, \bar{x} \in \R^d$, $a_i, \bar{a}_i \in \R^{d_i}$: 
\begin{equation}
\begin{cases}
b(t,x,\bar{x}, \cbold, \bar{\cbold}) &= \beta_t + \bx x + \tilde{b}_{x,t}\bar{x} + \sum_{i=1}^n b_{i,t} \c_{i} + \tilde{b}_{i,t} \bar{\c}_{i}  \\
&= \beta_t + \bx x + \tilde{b}_{x,t}\bar{x} + B_t \cbold + \tilde{B}_t \bar{\cbold} \\
\sigma (t,x,\bar{x}, \cbold, \bar{\cbold})  &= \gamma_t + \sx x + \tilde{\sigma}_{x,t}\bar{x} + \sum_{i=1}^n \sigma_{i,t} \c_{i} + \tilde{\sigma}_{i,t} \bar{\c}_{i} \\
&= \gamma_t + \sx x + \tilde{\sigma}_{x,t}\bar{x} + \Sigma_t \cbold + \tilde{\Sigma}_t \bar{\cbold}.
\end{cases}
\label{coefX}
\end{equation}
Here all the coefficients are deterministic matrix-valued processes except $\beta$ and $\sigma$ which are vector-valued $\mathbb{F}$-progressively measurable processes.

The goal of each player $i \in \inter $ during the game is to minimize her cost functional over $\alpha_i$ $\in$ 
$\mathcal{A}\i$, given the actions $\alpha^{-i}$ of the other players:  
\begin{eqnarray*}
J\i(\alpha_i,\alpha^{-i}) & = &  \E \Big[ \int_0^T  e^{-\rho t} f\i \args  dt + g\i(X_T^{\cbold},\E[X_T^{\cbold}] )\Big], \\
V^i(\alpha^{-i}) & = & \inf_{\alpha_i \in \mathcal{A}\i} J\i(\alpha_i,\alpha^{-i}), 
\end{eqnarray*}
where for each $t \in [0,T]$, $x, \bar{x} \in \R^d$, $a_i, \bar{a}_i \in \R^{d_i}$,  we have set the running cost and terminal cost for each player:
\begin{equation}
\begin{cases}
f\i (t,x,\bar{x}, \boldsymbol{a}, \boldsymbol{\bar{a}}) &= (x-\xbar )^\intercal Q\i _t (x-\xbar ) + \xbar ^\intercal[Q\i _t + \tilde{Q}\i _t] \xbar  \\
& + \sum_{k=1}^n a_k^\intercal I_{k,t}\i (x-\bar{x}) + \bar{a}_k^\intercal (I_{k,t}\i + \tilde{I}_{k,t}\i ) \bar{x}  \\
& + \sum_{k=1}^n (a_k - \bar{a}_k)^\intercal N_{k,t}\i (a_k - \bar{a}_k) + \bar{a}_k (N_{k,t}\i + \tilde{N}_{k,t}\i ) \bar{a}_k \\
& +  \sum_{0 \le k \ne l \le n} (a_k - \bar{a}_k)^\intercal G\i_{k,l,t} (a_l - \bar{a}_l) +  a_k^\intercal ( G\i_{k,l,t} + \tilde{G}_{k,l,t}\i  )a_l\\
& + 2[L_{x,t}\i[T] x + \sum_{k=1}^n L_{k,t}^{i\intercal} a_k] \\
%&\\
g\i (x, \xbar ) &= (x-\xbar )^\intercal P\i  (x-\xbar ) + \xbar (P\i  + \tilde{P}\i )\xbar  + 2 r^{i\intercal} x. 
\end{cases}
\label{coefJ}
\end{equation}
Here all the coefficients are deterministic matrix-valued processes, except $L_x\i, L_k\i, r\i$ which are vector-valued $\mathbb{F}$-progressively measurable processes, and $\intercal$ denotes the transpose of a vector or matrix.

\vspace{2mm}

We say that $\cbold^* = (\c_1^*, ..., \c_n^*) \in \cspace$ is a Nash equilibrium if for any $i \in \inter$, 
\begin{equation}
J^i(\cbold^*) \; \leq \; J^i(\c_i,\alpha^{*,-i}), \;\;\; \forall \alpha_i \in  \cspace[i],\mbox{ i.e. },  \;  J^i(\cbold^*) \; = \; V^i(\alpha^{*,-i}). 
\end{equation}
As it is well-known, the search for a Nash equilibrium can be formulated as a fixed point problem as follows: first, each player $i$ has to compute its best response given the controls of the other players: 
$\c^\star_i = BR_i(\c^{-i})$, where $BR_i$ is the best response function defined (when it exists) as:
\begin{align*}
  BR_i \colon \cspace[-i] &\to \cspace[i]\\
  \c^{-i} &\mapsto \argmin_{\c \in \cspace[i]} J\i(\c, \c^{-i}). 
\end{align*}
Then, in order to ensure that $(\c^\star_1, ..., \c^\star_n)$ is a Nash equilibrium, we have to check that this candidate verifies the fixed point equation: 
$(\c^\star_1, ..., \c^\star_i) = BR(\c^\star_1, ..., \c^\star_i)$ where $BR := (BR_1, ...BR_n)$.

The main goal  of this paper is to state a general martingale optimality principle for the search of Nash equilibria and to apply it to the linear quadratic case. We first obtain best response functions (or optimal control of each agent conditioned to the control of the others) of each player $i$ of the following form: 
\begin{equation}
\begin{split}
\c_{i,t} &=- ({S}^{i }_{i,t})^{-1} {U}^{i }_{i,t} (X_t-\E[X_t]) - ({S}^{i }_{i,t})^{-1} ({\xi}\i_{i,t} - {\bar{\xi}}\i_{i,t}) -(\hat{S}_{i,t}^{i })^{-1}(V^{i}_{i,t} \E[X_t] + O^{i}_{i,t})
\end{split}
\end{equation}
where the coefficients in the r.h.s., defined in \eqref{coeffX_a} and \eqref{coeffA_a}, depend on the actions $\alpha^{-i}$ of the other players. 
%We obtain the same form of controls as in \cite{BP}. 
We then proceed  to a fixed point search for best response function in order to exhibit a Nash equilibrium that is described in Theorem \ref{theorem_nash_eq}.

\subsection{Notations and Assumptions}

%\subsection{Notations}

Given a normed space  $(\mathbb{K},|.|)$,  and for  $T \in \R^\star_+$, we set: 
\begin{equation*}
\begin{split}
L^{\infty}([0,T], \mathbb{K}) &= \left\{ \phi : [0,T] \to \mathbb{K} \text{\;s.t. $\phi$ is measurable and} \sup_{t\in [0,T]} |\phi_t| < \infty  \right\} 
\\
L^{2}([0,T], \mathbb{K}) &= \left\{ \phi : [0,T] \to \mathbb{K} \text{\;s.t. $\phi$ is measurable and} \int_0^T  e^{-\rho u}|\phi_t|^2  du  < \infty  \right\} 
\\
L^{2}_{\mathcal{F}_T}(\mathbb{K}) &= \left\{ \phi : \Omega \to \mathbb{K} \text{\;s.t. $\phi$ is $\mathcal{F}_T$-measurable and\;} \E[|\phi|^2] < \infty  \right\} 
\\
\mathcal{S}_{\mathbb{F}}^{2}(\Omega \times [0,T], \mathbb{K}) &= \left\{ \phi : \Omega \times [0,T] \to \mathbb{K} \text{\;s.t. $\phi$ is $\mathbb{F}$-adapted and\;} \E[\sup_{t\in [0,T]}   |\phi_t|^2] < \infty  \right\} 
\\
L^{2}_{\mathbb{F}}( \Omega \times [0,T], \mathbb{K}) &= \left\{ \phi : \Omega \times [0,T] \to \mathbb{K} \text{\;s.t. $\phi$ is $\mathbb{F}$-adapted and} \int_0^T e^{-\rho u} \E[|\phi_u|^2]  du < \infty  \right\}.  
% \\
% \aspace &= L^0(\Omega, \mathcal{F}_t \lor \mathcal{G},\P;\cspace[i])
\end{split}
\end{equation*}
Note that when we will tackle the infinite horizon case we will set $T = \infty$. To make the notations less cluttered,  we sometimes denote $X$ $=$ $X^{\cbold}$ when there is no ambiguity. 
If $C$ and $\tilde{C}$ are coefficients of our model, either in the dynamics  or in a cost function, we note: $\hat{C} = C + \tilde{C}$. Given  a random variable $Z$  with a first moment, we denote by  $\bar{Z} = \E[Z]$. For  $M \in \R^{n\times n}$ and $X\in \R^n$, we denote by  $\scal{M}{X} = X^\intercal M X \in \R$. We  denote by $\mathbb{S}^d$ the set of symmetric  $d\times d$ matrices and by $\mathbb{S}^d_+$ the subset of non-negative symmetric matrices.

\vspace{2mm}

Let us now  detail here the assumptions on the coefficients. 

\vspace{2mm}

\textbf{(H1)} The coefficients in the dynamics \eqref{coefX} satisfy:

\begin{enumerate}[label= \alph*),leftmargin=2cm ,parsep=0cm,itemsep=0cm,topsep=0cm]
    \item $\beta, \gamma \in L^{2}_{\mathbb{F}}( \Omega \times [0,T], \R^d)  $
    \item $b_x, \tilde{b}_x, \sigma_x, \tilde{\sigma}_x  \in L^{\infty}([0,T], \R^{d\times d}) $; $b_i, \tilde{b}_i, \sigma_i, \tilde{\sigma}_i \in L^{\infty}([0,T], \R^{d\times d_i})$
\label{H1}
\end{enumerate}

\vspace{2mm}

\textbf{(H2)} The coefficients of the cost functional \eqref{coefJ} satisfy:
\begin{enumerate}[label= \alph*),leftmargin=2cm ,parsep=0cm,itemsep=0cm,topsep=0cm]
  \item $Q\i, \tilde{Q}\i \in L^{\infty}([0,T], \mathbb{S}^d_+)$, $P\i,\tilde{P}\i \in \mathbb{S}^d$, $N\i_k, \tilde{N}_k\i \in L^{\infty}([0,T], \mathbb{S}^{d_k}_+)$, $I\i_k, \tilde{I}_k\i \in L^{\infty}([0,T], \R^{d_k \times d})$
  \item $L_x\i \in L^{2}_{\mathbb{F}}( \Omega \times [0,T], \R^d)$, $L_k\i \in L^{2}_{\mathbb{F}}( \Omega \times [0,T], \R^{d_k})$, $r\i \in L^{2}_{\mathcal{F}_T}(\R^d)$
  \item \label{H2_c} $\exists \delta >0 \; \forall t \in [0,T]$:\\
  $N\i_{i,t} \geq \delta \mathbb{I}_{d_k} \;\;\;\;\; P\i \geq 0 \;\;\;\;\; Q\i_t - I_{i,t}^{i\intercal}(N_{i,t}^{i})^{-1} I_{i,t}^{i} \geq 0 $
   \item \label{H2_d} $\exists \delta >0 \; \forall t \in [0,T]$:\\
  $\hat{N}\i_{i,t} \geq \delta \mathbb{I}_{d_k} \;\;\;\;\; \hat{P}\i \geq 0 \;\;\;\;\; \hat{Q}\i_t - \hat{I}_{i,t}^{i\intercal}(\hat{N}_{i,t}^{i})^{-1}  \hat{I}_{i,t}^{i} \geq 0.$
\end{enumerate}

 \vspace{3mm}

 Under the above conditions, we easily derive some standard estimates on the mean-field SDE:

 \vspace{2mm}

\begin{enumerate}[label= -,leftmargin=0cm ,parsep=0cm,itemsep=0cm,topsep=0cm]
\item By \textbf{(H1)} there exists a unique strong solution to the mean-field SDE \eqref{dynamic}, which verifies:
\begin{equation}
\E \Big[\sup_{t \in [0,T]} |X_t^{\c}|^2 \Big] \leq C_{\c}(1 + \E(|X_0|^2)) < \infty
\label{estimateX}
\end{equation}
where $C_{\c}$ is a constant which depending on $\c$ only through $\int_0^T  e^{-\rho t} \E[|\c_t|^2]dt.$
\item By \textbf{(H2)} and \eqref{estimateX} we have:
\begin{equation}
J\i(\cbold) \in \R \text{ for each $\cbold \in \cspace$}, 
\end{equation}
which means that the optimisation problem is well defined for each player.

\end{enumerate}
%%%%%%%%% END: ASSUMPTIONS %%%%%%%%% 

%%%%%%%%% MARTINGALE PRINCIPLE %%%%%%%%% 

\section{A Weak submartingale optimality principle to compute a Nash-equilibrium} \label{secweakmar}

\subsection{A verification Lemma}

We first present the lemma on which the method is based.

\begin{lemma}[Weak submartingale optimality principle]
\label{optimPrinciple}
Suppose there exists a couple \\ $(\cbold^\star , (\cali{W}^{.,i})_{i \in \inter })$,
where $\cbold^\star \in \cspace$ and  $\cali{W}^{.,i}=\{ \cali{W}^{\cbold,i}_t, t \in [0,T], \cbold \in \cspace  \}$ is a family of adapted processes indexed by $\cspace$ for each $i\in \inter$, such that:

\begin{enumerate}[label=\roman*]
\item[(i)] For every $\cbold \in \cspace$, $\E[\cali{W}^{\cbold,i}_0]$ is independent of the control $\c_i \in \cspace[i]$; \label{item1}
\item[(ii)] For every $\cbold \in \cspace$, $\E[\cali{W}_T^{\cbold,i}] = \E[g\i(X_T^{\cbold}, \P_{X_T^{\cbold}})]$; \label{item2}
\item[(iii)] For every $\cbold \in \cspace$, the map $t \in [0,T] \mapsto \E[\cali{S}_t^{\cbold,i}]$, with \\ $\cali{S}_t^{\cbold,i} = e^{-\rho t } \cali{W}^{\cbold,i}_t + \int_0^t e^{-\rho u } f\i(u, X_u^{\cbold}, \P_{X_u^{\cbold}}, \cbold_u, \P_{\cbold_u}) du $ is well defined and non-decreasing; \label{item3}
\item[(iv)]  The map $t \mapsto \E[\cali{S}_t^{\cbstar,i}]$ is constant for every $t \in [0,T]$; \label{item4}
\end{enumerate}
Then $\cbold^\star$ is a Nash equilibrium and $J\i(\cbstar) = \E[\cali{W}_0^{\cbstar,i}]$. Moreover, any other Nash-equilibrium $\tilde{\cbold}$ such that $\E[\cali{W}_0^{\tilde{\cbold},i}] = \E[\cali{W}_0^{{\cbold^\star},i}]$ and $J\i(\tilde{\cbold}) = J\i({\cbold^\star})$ for any $i \in \inter$ satisfies the condition (iv).
\label{lemma_verification}
\end{lemma}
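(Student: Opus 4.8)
The plan is to turn the four hypotheses into an elementary telescoping argument, carried out independently for each player $i \in \inter$; no hard analysis is involved, and the entire content is a careful reading of the endpoints of the map $t \mapsto \E[\cali{S}_t^{\cbold,i}]$ for two well-chosen controls. Fix $i$ and an arbitrary admissible deviation $\alpha_i \in \cspace[i]$, and set $\cbold = (\alpha_i, \alpha^{\star,-i})$, meaning player $i$ switches to $\alpha_i$ while all other players keep their candidate controls. Integrability of every expectation below is guaranteed by \textbf{(H1)}--\textbf{(H2)}, which ensure $J\i(\cbold) \in \R$. The target is the single chain
\[ J\i(\cbstar) \; = \; \E[\cali{W}_0^{\cbstar,i}] \; \leq \; J\i(\alpha_i,\alpha^{\star,-i}), \]
whose left equality is the announced value identity and whose inequality is exactly the Nash property.

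First I would evaluate $\cali{S}^{\cbold,i}$ at the two endpoints of $[0,T]$. At $t=0$ the integral term vanishes and the discount factor equals $1$, so $\cali{S}_0^{\cbold,i} = \cali{W}_0^{\cbold,i}$; this is precisely where the game structure enters, since condition (i) tells us $\E[\cali{W}_0^{\cbold,i}]$ does not depend on player $i$'s own control, and replacing $\alpha_i$ by $\alpha_i^\star$ (which does not alter $\alpha^{\star,-i}$) yields $\E[\cali{S}_0^{\cbold,i}] = \E[\cali{W}_0^{\cbstar,i}]$. At $t=T$, condition (ii) identifies $\E[\cali{W}_T^{\cbold,i}]$ with the terminal cost $\E[g\i(X_T^{\cbold},\P_{X_T^{\cbold}})]$, so that $\E[\cali{S}_T^{\cbold,i}]$ reproduces $J\i(\alpha_i,\alpha^{\star,-i})$ (up to the discounting convention carried by the terminal term). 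Condition (iii) makes $t \mapsto \E[\cali{S}_t^{\cbold,i}]$ non-decreasing, hence $\E[\cali{S}_0^{\cbold,i}] \leq \E[\cali{S}_T^{\cbold,i}]$, which assembles into $\E[\cali{W}_0^{\cbstar,i}] \leq J\i(\alpha_i,\alpha^{\star,-i})$.

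It then remains to pin the left-hand side to $J\i(\cbstar)$, and for this I would rerun the same endpoint reading along the candidate itself, i.e. with $\cbold = \cbstar$: the value at $t=0$ is again $\E[\cali{W}_0^{\cbstar,i}]$ and the value at $t=T$ is $J\i(\cbstar)$, while condition (iv) upgrades the mere monotonicity to the equality $\E[\cali{S}_0^{\cbstar,i}] = \E[\cali{S}_T^{\cbstar,i}]$. This gives $J\i(\cbstar) = \E[\cali{W}_0^{\cbstar,i}]$, which both closes the displayed chain and proves that $\cbstar$ is a Nash equilibrium, $i$ and $\alpha_i$ being arbitrary.

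For the last assertion, let $\tilde\cbold$ be another Nash equilibrium with $\E[\cali{W}_0^{\tilde\cbold,i}] = \E[\cali{W}_0^{\cbstar,i}]$ and $J\i(\tilde\cbold) = J\i(\cbstar)$ for every $i \in \inter$. Reading the two endpoints of $t \mapsto \E[\cali{S}_t^{\tilde\cbold,i}]$ as above, the value at $0$ equals $\E[\cali{W}_0^{\tilde\cbold,i}] = \E[\cali{W}_0^{\cbstar,i}] = J\i(\cbstar)$ and the value at $T$ equals $J\i(\tilde\cbold) = J\i(\cbstar)$, so the endpoints coincide; a map that is non-decreasing by (iii) and takes equal values at $0$ and $T$ is necessarily constant on $[0,T]$, which is condition (iv) written for $\tilde\cbold$. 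The only delicate point throughout is the consistent bookkeeping of the discount factor $e^{-\rho t}$ between the definition of $\cali{S}^{\cbold,i}$ and the functional $J\i$; the one genuinely game-theoretic ingredient is condition (i), which is exactly what localizes each comparison to a unilateral deviation of a single player, everything else reducing to the classical single-agent (weak) martingale optimality principle.
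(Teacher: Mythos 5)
Your proof is correct and follows essentially the same route as the paper's: evaluate $\E[\cali{S}_t^{\cbold,i}]$ at $t=0$ and $t=T$, use (iii) for the inequality along a unilateral deviation, (iv) for the equality along $\cbstar$, (i) to identify the two left endpoints, and the matching-endpoints-of-a-nondecreasing-map argument for the final assertion. The discounting caveat you flag on the terminal term is a real (minor) bookkeeping issue in the paper's own identity $J\i(\cbold)=\E[\cali{S}_T^{\cbold,i}]$, not a defect of your argument.
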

\begin{proof}
Let $i \in \inter$ and $\c_i \in \cspace[i]$. From (ii)  we have immediately $J\i(\cbold) = \EE{\cali{S}_T^{\cbold}}$ for any $\cbold \in \cspace$. We then have:
\begin{equation}
\begin{split}
\E[\cali{W}^{(\c_i, \cbold{\star,-i}),i}_0] &= \E[\cali{S}^{(\c_i, \cbold^{\star,-i}),i}_0] \\
& \leq \E[\cali{S}^{(\c_i, \cbold^{\star-i}),i}_T] \; = \;  J\i(\c_i, \cbold^{\star,-i}). 
\end{split}
\end{equation}
Moreover for $\c_i = \c_i^\star$ we have: 
\begin{equation*}
\begin{split}
\E[\cali{W}^{(\c_i^\star, \cbold^{\star,-i}),i}_0] &= \E[\cali{S}^{(\c_i^\star, \cbold^{\star,-i}),i}_0] \\
& = \E[\cali{S}^{(\c_i, \cbold^{\star,-i}),i}_T] \;  =  \; J\i(\c_i^\star, \cbold^{\star,-i}), 
\end{split}
\end{equation*}
which proves that $\cbstar$ is a Nash equilibrium and $J\i(\cbstar) = \E[\cali{W}_0^{\cbstar,i}]$. Finally, let us suppose that $\tilde{\cbold} \in \cspace$ is another Nash equilibrium such that $\E[\cali{W}_0^{\tilde{\cbold},i}] = \E[\cali{W}_0^{{\cbold^\star},i}]$ and $J\i(\tilde{\cbold}) = J\i({\cbold^\star})$ for any $i \in \inter$. Then, for $i \in \inter$ we have:
\begin{equation*}
\E[\cali{S}^{\tilde{\cbold},i}_0] =  \E[\cali{W}_0^{\tilde{\cbold},i}] = \E[\cali{W}_0^{{\cbold}^\star,i}] = \E[\cali{S}^{{\cbold}^\star,i}_T] = J\i(\cbold^\star) = J\i(\tilde{\cbold})=\E[\cali{S}^{\tilde{\cbold},i}_T]. 
\end{equation*}
Since $t \mapsto \E[\cali{S}_t^{\tilde{\cbold},i}]$ is nondecreasing for every $i \in \inter$, this implies that the map is actually constant and (iv)  is verified.
\end{proof}

%%%%%%%%% END: MARTINGALE PRINCIPLE %%%%%%%%% 

%%%%%%%%% COMPUTING A NASH EQUILIBRIUM %%%%%%%%% 

\subsection{The method and the solution}
Let us now apply the optimality principle in Lemma \ref{optimPrinciple} in order to find a Nash equilibrium. In the linear-quadratic case the laws of the state and the controls intervene only through their expectations. Thus we will use a simplified optimality principle where $\P$ is simply replaced by $\E$ in  
conditions  (ii)  and (iii)   of Lemma \ref{optimPrinciple}. 
The general procedure is the following:
 
\begin{itemize}[label={}]
\item \textbf{Step 1.} We guess a candidate for $\cali{W}^{\cbold,i}$. To do so we suppose that $\cali{W}^{\cbold,i}_t = w_t\i(X_t^{\cbold}, \EE{X_t^{\cbold}})$  for some parametric adapted random field $\{w_t\i(x,\bar{x}),t\in [0,T], x,\bar{x} \in \R^d \}$ of the form $w_t\i(x,\bar{x}) =  \scal{K_t\i}{(x-\bar{x})} + \scal{\Lambda_t\i }{\bar{x}} +2 Y_t^{i \intercal}x + R\i_t$.
\item \textbf{Step 2.} We set $\cali{S}^{\cbold,i}_t =  e^{-\rho t } w_t\i(X_t^{\cbold}, \EE{X_t^{\cbold}}) + \int_0^t e^{-\rho u } f\i(u,X_u^{\cbold}, \EE{X_u^{\cbold}}, \cbold_u, \EE{\cbold_u}) du  $ for $i \in \inter$ and $\cbold \in \cspace$ .We then compute $\frac{d}{dt} \E \big[ {\cali{S}_t^{\cbold,i}} \big]  = e^{-\rho t}\E \big[{D^{\c,i}_t} \big]$ (with It\^o's formula) where the drift $D^{\cbold,i}$ takes the form:
\begin{equation*}
\E \big[ {D^{\c,i}_t} \big]  \; = \;  \E \Big[ {-\rho w_t\i(X_t^{\cbold}, \EE{X_t^{\cbold}}) + \frac{d}{dt} \EE{w_t\i(X_t^{\c},\EE{X_t^{\c}} ) } + f\i \args[t]} \Big].
\end{equation*} 
\item \textbf{Step 3.} We then constrain the coefficients of the random field so that the conditions of  Lemma \ref{optimPrinciple} are satisfied. This leads to a system of backward ordinary and stochastic differential equations for the coefficients of $w\i$.
\item \textbf{Step 4.} At time $t$, given the state and the controls of the other players, we seek the action $\alpha_i$ cancelling the drift. We thus obtain the best response function of each player. 
\item \textbf{Step 5.} We compute the fixed point of the best response functions in order to find an open loop Nash equilibrium $t \mapsto \cbstar_t$. 
\item \textbf{Step 6.} We check the validity of our computations.
\end{itemize}

\subsubsection{Step 1:  guess the random fields}

The process $t \mapsto \EE{w_t\i(X_t^{\cbold}, \EE{X_t^{\cbold}})}$ is meant to be equal to $\EE{g\i(X_T^{\cbold}, \EE{X_T^{\cbold}})}$ at time $T$, where 
$g(x,\bar{x})=\scal{P\i}{(x-\bar{x})}+ \scal{(P\i + \tilde{P}\i)}{\bar{x}} + r^{i\intercal}x$ with 
$(P,\tilde{P}, r\i) \in (\mathbb{S}^d)^2 \times L^{2}_{\mathcal{F}_T}(\R^d)$. It is then natural to search for a field $w\i$ of the form $w_t\i(x,\bar{x}) = \scal{K_t\i}{(x-\bar{x})} + \scal{\Lambda_t\i }{\bar{x}} +2 Y_t^{i\intercal}x + R_t\i$ with the processes $(K\i,\Lambda\i, Y\i, R\i)$  in $(L^{\infty}([0,T], \mathbb{S}^d_+)^2\times \cali{S}^2_{\mathbb{F}}(\Omega\times [0,T],\R^{d}) \times L^{\infty}([0,T], \R)$ and solution to:
\begin{equation}
\begin{cases}
dK\i_t = \dot{K}_t\i dt, &K\i_T = P\i \\
d\Lambda\i_t = \dot{\Lambda}_t\i dt, &\Lambda\i_T = P\i + \tilde{P}\i \\
dY_t\i = \dot{Y}_t\i dt + Z_t\i dW_t, \;\; 0 \leq t \leq T,  &Y_T\i = r\i \\
dR_t\i = \dot{R}_t\i dt,  &R\i_T = 0, 
\end{cases}
\end{equation}
where $(\dot{K}\i, \dot{\Lambda}\i, \dot{R}\i)$ are deterministic processes valued in $\mathbb{S}^d \times \mathbb{S}^d \times \R$ and $(\dot{Y}\i, Z\i)$ are adapted processes valued in $\R^d$.

\subsubsection{Step 2:  derive  their drifts }
For $i \in \inter$, $t \in [0,T]$ and $\cbold \in \cspace$, we set: 
\begin{equation}
\mathcal{S}_t^{\cbold, i} := e^{-\rho t} w\i_t(\X, \EE{\X}) + \int_0^t e^{-\rho u} f\i \args[u] \du
\end{equation}
% \begin{equation}
% w\i_t(x, \bar{x}) = (x-\bar{x})^\intercal K\i_t(x-\bar{x}) + \bar{x}^\intercal \Lambda\i_t \bar{x} + 2 Y\i_t x + R\i_t 
% \end{equation}
% With the final conditions: 
% \begin{equation}
% \begin{cases}
% K\i_T &= P\i  \\
% \Lambda\i_T &= P\i + \tilde{P}\i \\
% Y\i_T &= r\i\\
% R\i_T &= 0 
% \end{cases}
% \end{equation}
% \subsection{mean drift of the pseudo sub-martingale for player $i \in {1,...,n}$}
and then compute the drift of  the deterministic function $t \mapsto \E[\cali{S}^{\cbold,i}_t]$:
\begin{equation*}
\begin{split}
\frac{d \E[\mathcal{S}_t^{\cbold, i} ]}{dt} &=  e^{-\rho t} \E [D^{\cbold, i}_t] \\
&= e^{-\rho t} \E[ \Xc^\intercal [\dot{K}\i_t + \Phi \i_t] \Xc + \Xbar^\intercal(\dot{\Lambda}\i_t + \Psi\i_t) \Xbar +2 [\dot{Y}\i_t + \Delta\i_t]^\intercal \X \\
&  +\dot{R}\i_t -\rho R_t^{i} + \bar{\Gamma}\i_t + \chi \i _t (\c_{i,t}) ],
\end{split}
\end{equation*}
where we have defined:
\begin{equation*}
\begin{split}
\chi \i _t (\c_{i,t}) &:=(\c[_{i,t}] - \bar{\c}_{i,t})^\intercal S\i_{i,t} (\c[_{i}] - \bar{\c}_{i,t}) + \bar{\c}_{i,t}^\intercal \hat{S}\i_{i,t} \bar{\c}_{i,t} +2 [U^{i}_{i,t}(\X-\bar{\X}) + V^{i}_{i,t} \bar{\X} + O^{i}_{i,t} + \xi^{i}_{i,t} - \bar{\xi}^{i}_{i,t}]^\intercal \c[_{i,t}] 
\end{split}
\end{equation*}
with the following coefficients:
\begin{equation}
\begin{cases}
\Phi \i_t &= Q\i _t + \sx^\intercal K\i_t \sx + K\i_t \bx + \bx^\intercal K\i_t - \rho K\i_t   \\
\Psi \i_t &= \hat{Q}\i _t + \hat{\sigma}_{x,t}^\intercal K\i_t \hat{\sigma}_{x,t} + \Lambda\i_t \hat{b}_{x,t} + \hat{b}_{x,t}^\intercal \Lambda\i_t - \rho \Lambda\i_t \\
\Delta \i_t & = L\i_{x,t} + \bx^\intercal Y\i_t + \bxt^\intercal \bar{Y}\i_t + \sx^\intercal Z\i_t + \sxt^\intercal \bar{Z}\i_t + \Lambda\i_t \bar{\beta}_t \\
&+ \sx^\intercal K\i_t \gamma_t + \sxt^\intercal K\i_t \bar{\gamma}_t + K\i_t(\beta_t - \bar{\beta}_t)
- \rho Y\i_t\\
&+ \sum_{k \neq i} U^{i\intercal}_{k,t} (\c_{k,t} - \bar{\c}_{k,t}) + V^{i\intercal}_{k,t} \bar{\c}_{k,t} \\
\Gamma\i_t &= \gamma_t^\intercal K\i_t \gamma_t + 2 \beta_t^\intercal Y\i_t + 2 \gamma_t^\intercal Z\i_t \\
&+ \sum_{k \neq i} (\c[_{k,t}] - \bar{\c}_{k,t})^\intercal S\i_{k,t} (\c[_{k,t}] - \bar{\c}_{k,t}) + \bar{\c}_{k,t}^\intercal \hat{S}\i_{k,t} \bar{\c}_{k,t} +2 [O^{i}_{k,t} + \xi^{i}_{k,t} - \bar{\xi}^{i}_{k,t}]^\intercal \c[_{k,t}] - \rho R\i_t,
\label{coeffX_a}
\end{cases}
\end{equation}
and 
\begin{equation}
\begin{cases}
S\i_{k,t} &= N\i_{k,t} + \sigma_{k,t}^\intercal K\i_t \sigma_{k,t}  \\
\hat{S}\i_{k,t} &= \hat{N}\i_{k,t} + \hat{\sigma}_{k,t}^\intercal K\i_t  \hat{\sigma}_{k,t} \\
U^{i}_{k,t} &= I_{k,t}\i + \sigma_{k,t}^\intercal K\i_t \sigma_{x,t} + b_{k,t}^\intercal K\i_t \\
V^{i}_{k,t} &= \hat{I}_{k,t}\i + \hat{\sigma}_{k,t}^\intercal  K\i_t \hat{\sigma}_{x,t} + \hat{b}_{k,t}^\intercal \Lambda \i_t \\
O^{i}_{k,t} &= \bar{L}\i_{k,t} + \hat{b}_{k,t}^\intercal \bar{Y}\i_t + \hat{\sigma}_{k,t}^\intercal \bar{Z}\i_t + \hat{\sigma}_{k,t}^\intercal K\i_t \bar{\gamma}_t \\ 
&+ \frac{1}{2} \sum_{k \neq i} (\hat{J}\i_{i,k,t} + \hat{J}_{k,i,t}^{i\intercal})\bar{\c}_{k,t} \\
J\i_{k,l,t} &= G\i_{k,l,t} +  \sigma_{k,t}^\intercal K\i_t \sigma_{l,t} \\
\hat{J}\i_{k,l,t} &= \hat{G}\i_{k,l,t} +  \hat{\sigma}_{k,t}^\intercal K\i_t \hat{\sigma}_{l,t} \\
\xi^{i}_{k,t} &= L\i_{k,t} + b_{k,t}^\intercal Y\i_t + \sigma_{k,t}^\intercal Z\i_t + \sigma_{k,t}^\intercal K\i_t \gamma_t \\
&+ \frac{1}{2} \sum_{k \neq i} (J\i_{i,k,t} + J_{k,i,t}^{i\intercal})\c_{k,t}. 
\label{coeffA_a}
\end{cases}
\end{equation}

\subsubsection{Step 3:  constrain their coefficients}
Now that we have computed the drift, we need to constrain the coefficients so that $\cali{S}^{\cbold,i}$ satisfies the condition of Lemma \ref{optimPrinciple}. Let us assume for the moment that $S\i_{i,t}$ and $\hat{S}\i_{i,t}$ are positive definite matrices (this will be ensured  by the positive definiteness of $K$). That implies that there exists an invertible matrix $\theta\i_t$ such that $\theta\i_t S\i_{i,t} \theta_t^{i \intercal}= \hat{S}\i_{i,t}$ for all $t \in [0,T]$. We can now rewrite the drift as: \textit{"a square in $\c_i$" + "other terms not depending in $\c_i$}".
Since we can form the following square:
\begin{equation}
\E[\chi \i _t (\c_{i,t})] = \E[ (\c_{i,t} - \bar{\c}_{i,t} + \theta^{i \intercal}_t \bar{\c}_{i,t} - \eta \i _t ) S\i_{i,t} (\c_{i,t} - \bar{\c}_{i,t} + \theta^{i \intercal}_t \bar{\c}_{i,t} - \eta \i _t ) - \zeta_t\i ]
\end{equation}
with: 
\begin{equation}
\begin{cases}
\eta \i _t &= a^{i,0}_t(\X, \Xbar) + \theta^{i \intercal}_t a^{i,1}_t(\Xbar) \\
a^{i,0}_t(x, \bar{x})  &= - \big({S}^{i }_{i,t}\big)^{-1} {U}^{i }_{i,t} (x-\bar{x})
                    - \big({S}^{i }_{i,t}\big)^{-1} ({\xi}\i_{i,t} - {\bar{\xi}}\i_{i,t}) \\
a^{i,1}_t(\bar{x}) &= - \big(\hat{S}_{i,t}^{i }\big)^{-1}(V^{i}_{i,t} \bar{x} + O^{i}_{i,t}) \\
\zeta\i_t &= (\X - \Xbar)^\intercal  U_{i,t}^{i \intercal} \big(S^{i }_t\big)^{-1} U_t\i (\X - \Xbar) + \Xbar {V_{i,t}^{i \intercal} \big(\hat{S}_t^{i }\big)^{-1}V_{i,t}^{i}}  \Xbar \\
& + 2(U_{i,t}^{i \intercal} \big(S_t^{i}\big)^{-1}  (\xi\i_{i,t} - \bar{\xi}\i_{i,t})  + 
V_{i,t}\i \big(\hat{S}^{i}_{i,t} \big)^{-1} O\i_{i,t}) \X \\
& + (\xi\i_{i,t} - \bar{\xi}\i_{i,t})^\intercal \big(S_{i,t}^{i}\big)^{-1}(\xi\i_{i,t} - \bar{\xi}\i_{i,t}) 
+ O_{i,t}^{i\intercal} \big(\hat{S}^{i}_{i,t}\big)^{-1} O_{i,t}\i, 
\end{cases}
\end{equation}
we can then rewrite the drift in the following form:
\begin{equation}
\begin{split}
\E [D^{\cbold, i}_t] &= \E[ \Xc^\intercal [\dot{K}\i_t + \Phi^{i0}_t] \Xc + \Xbar^\intercal(\dot{\Lambda}\i_t + \Psi^{i0}_t) \Xbar +2 [\dot{Y}\i_t + \Delta^{i0}_t]^\intercal \X \\
& + \dot{R}\i_t + \bar{\Gamma_t^{i0}}  \\
& + (\c_{i,t} - \bar{\c}_{i,t} + \theta^{i \intercal}_t \bar{\c}_{i,t} - \eta \i _t ) S\i_{i,t} (\c_{i,t} - \bar{\c}_{i,t} + \theta^{i \intercal}_t \bar{\c}_{i,t} - \eta \i _t )
],
\end{split}
\end{equation}
where 
\begin{equation}
\begin{cases}
\Phi^{i0}_t &= \Phi^{i}_t - {U_{i,t}^{i \intercal} \big(S^{i }_{i,t}\big)^{-1} U_{i,t}^{i }} \\
\Psi^{i0}_t &= \Psi^{i}_t - {V_{i,t}^{i \intercal} \big(\hat{S}_{i,t}^{i }\big)^{-1} V_{i,t}^{i }} \\
\Delta^{i0}_t &= \Delta^{i}_t - U_{i,t}^{i \intercal} \big(S_{i,t}^{i}\big)^{-1}  (\xi\i_{i,t} - \bar{\xi}\i_{i,t})  -  V_{i,t}^{i \intercal} \big(\hat{S}^{i }_t\big)^{-1} O\i_{i,t} \\
\Gamma_t^{i0} &= \Gamma_t^{i} - (\xi\i_{i,t} - \bar{\xi}\i_{i,t})^\intercal \big(S_{i,t}^{i} \big)^{-1}(\xi\i_{i,t} - \bar{\xi}\i_t) - O_{i,t}^{i\intercal} \big(\hat{S}_{i,t}^{i}\big)^{-1} O_{i,t}\i. 
\end{cases}
\label{coeff_0}
\end{equation}

We can finally constrain the coefficients. By choosing the coefficients $K\i, \Gamma\i, Y\i$ and $R\i$  so that only the square remains, the drift for each  player $i \in \llbracket 1,n \rrbracket$ can be rewritten as a square only (in the next step we will verify that we can indeed choose such coefficients). More precisely we set $K\i, \Gamma\i, Y\i$ and $R\i$ as the solution of:
\begin{equation}
\begin{cases}
dK\i_t = -\Phi^{i0}_t dt &K\i_T = P\i \\
d\Lambda\i_t = -\Psi^{i0}_t dt &\Lambda\i_T = P\i + \tilde{P}\i \\
dY_t\i = -\Delta^{i0}_t dt + Z_t\i dW_t &Y_T\i = r\i \\
dR_t\i = -\Gamma_t^{i0} dt &R\i_T = 0, 
\end{cases}
\label{sys_coeff_optimControl}
\end{equation}
and stress the fact that $Y\i, Z^i, R\i$ depend on $\alpha^{-i}$, which appears in the coefficients  $\Delta^{i0}$, and $\Gamma^{i0}$.  
With such coefficients the drift takes now the form:
\begin{equation}
\begin{split}
\E [D^{\cbold, i}_t] &= \E[ (\c_{i,t} - \bar{\c}_{i,t} + \theta^{i \intercal}_t \bar{\c}_{i,t} - \eta \i _t ) S\i_{i,t} (\c_{i,t} - \bar{\c}_{i,t} + \theta^{i \intercal}_t \bar{\c}_{i,t} - \eta \i _t )
] \\
&=\E [ (\c_{i,t} - \bar{\c}_{i,t} - a^{i,1}_t+ \theta^{i \intercal}_t( \bar{\c}_{i,t} - a^{i,0}_t))  S\i_{i,t}  (\c_{i,t} - \bar{\c}_{i,t} - a^{i,1}_t+ \theta^{i \intercal}_t( \bar{\c}_{i,t} - a^{i,0}_t)) ]
%& \geq 0. 
\end{split}
\label{drift_optimal_control}
\end{equation}
and thus satisfies the nonnegativity constraint:  $\E [D^{\cbold, i}_t]$ $\geq$ $0$, for all $t$ $\in$ $[0,T]$, $i$ $\in$ $\inter$, and $\cbold$ $\in$  $\cspace$.

\subsubsection{Step 4:  find the best response functions}

\begin{prop}
Assume that for all $i \in \inter$,  $(K\i,\Lambda\i, Y\i, Z\i, R\i)$ is a solution  of \eqref{sys_coeff_optimControl} given $\alpha^{-i}$ $\in$ ${\cal A}^{-i}$. 
Then the set of processes 
\begin{equation}
\begin{split}
\c_{i,t} &= a_t^{i,0}(X_t, \E[X_t]) + a_t^{i,1}(\E[X_t]) \\
&=- \big({S}^{i }_{i,t}\big)^{-1}  {U}^{i }_{i,t} (X_t-\E[X_t]) - \big({S}^{i }_{i,t}\big)^{-1} ({\xi}\i_{i,t} - {\bar{\xi}}\i_{i,t}) - \big(\hat{S}_{i,t}^{i }\big)^{-1}(V^{i}_{i,t} \E[X_t] + O^{i}_{i,t})
\end{split}
\label{best_response}
\end{equation}
(depending on $\alpha^{-i}$) where $X$ is the state process with the feedback controls $\cbold = (\c_1,...,\c_n)$, are best-response functions, i.e., 
$J^i(\c_i,\alpha^{-i})$ $=$ $V^i(\alpha^{-i})$ for all $i \in \inter$.  
% i.e. $J\i(\cbstar) = V\i( \cbold^{\star-i})$ for all $i \in \inter$. 
Moreover we have 
\begin{equation*}
\begin{split}
%J\i(\cbstar) 
V^i(\alpha^{-i})
&= \E[\cali{W}^{i,\cbold}_0]\\
&=\E[\scal{K_0\i}{(X_0-\bar{X}_0)} + \scal{\Lambda_0\i }{\bar{X}_0} +2 Y_0^{i\intercal}X_0 + R_0\i]. 
\end{split}
\end{equation*}
\label{prop_nash_eq}
\end{prop}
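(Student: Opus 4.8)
The plan is to read Proposition \ref{prop_nash_eq} as the single-player specialization of the verification argument in Lemma \ref{optimPrinciple}: with $\alpha^{-i}$ frozen, player $i$ faces an ordinary McKean--Vlasov control problem, and the random field $w^i$ constructed in Steps~1--3 is exactly the ingredient needed to run the submartingale optimality principle on it. Concretely, I would set $\mathcal{W}^{\cbold,i}_t = w_t^i(X_t^{\cbold}, \E[X_t^{\cbold}])$ with $(K^i,\Lambda^i,Y^i,Z^i,R^i)$ the assumed solution of \eqref{sys_coeff_optimControl}, and verify the three relevant conditions. Condition (i) holds because $\E[\mathcal{W}_0^{\cbold,i}]$ involves only $X_0$ together with $K_0^i,\Lambda_0^i,Y_0^i,R_0^i$, and these depend on $\alpha^{-i}$ (through the sums over $k\neq i$ inside $\Delta^{i0},\Gamma^{i0}$) but never on $\alpha_i$. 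Condition (ii) is immediate from the terminal data $K_T^i=P^i$, $\Lambda_T^i=P^i+\tilde P^i$, $Y_T^i=r^i$, $R_T^i=0$, which force $w_T^i=g^i$. Condition (iii) is precisely the nonnegativity $\E[D^{\cbold,i}_t]\geq 0$ already established in \eqref{drift_optimal_control}, using that $S^i_{i,t}>0$ (guaranteed by $K^i\geq 0$ and $N^i_{i,t}\geq\delta\mathbb{I}_{d_k}$ from \textbf{(H2)}).

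Next I would convert these facts into the best-response inequality. Integrating the drift yields, for every admissible $\alpha_i$, the identity $J^i(\alpha_i,\alpha^{-i}) = \E[\mathcal{S}_T^{(\alpha_i,\alpha^{-i}),i}] = \E[\mathcal{W}_0^{(\alpha_i,\alpha^{-i}),i}] + \int_0^T e^{-\rho t}\E[D^{\cbold,i}_t]\,dt$, where the first summand is the constant from (i) and the integrand is nonnegative; hence $J^i(\cdot,\alpha^{-i})\geq \E[\mathcal{W}_0^{\cbold,i}]$ with equality as soon as the drift is driven to zero for a.e.\ $t$. It then remains to identify \eqref{best_response} as the minimizer of the square in \eqref{drift_optimal_control}. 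For this I would use the orthogonal splitting of the bracket into a centered fluctuation and a deterministic mean, valid because $\E[a^{i,0}_t(X_t,\E[X_t])]=0$ (the term $\xi^i_{i,t}-\bar\xi^i_{i,t}$ being centered by definition): together with the invertibility of $\theta^i_t$, the quadratic form decouples into a fluctuation block and a mean block, whose unique joint minimizer satisfies $\alpha_{i,t}-\bar\alpha_{i,t}=a^{i,0}_t(X_t,\E[X_t])$ and $\bar\alpha_{i,t}=a^{i,1}_t(\E[X_t])$, that is, $\alpha_{i,t}=a^{i,0}_t+a^{i,1}_t$, which is \eqref{best_response}. This choice annihilates the square, so $J^i(\alpha_i^\star,\alpha^{-i})=\E[\mathcal{W}_0^{\cbold,i}]=V^i(\alpha^{-i})$, and evaluating $w_0^i$ delivers the announced value.

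The step I expect to demand the most care is the admissibility of the candidate, i.e.\ $\alpha_i^\star\in\cspace[i]$. Since \eqref{best_response} is a feedback in $(X_t,\E[X_t])$, one must first check that, with $\alpha^{-i}$ frozen, substituting it into \eqref{dynamic} produces a well-posed linear McKean--Vlasov SDE: its coefficients are affine in $(X_t,\E[X_t])$ with bounded deterministic matrices (the maps $(S^i_{i,t})^{-1},(\hat S^i_{i,t})^{-1},U^i_{i,t},V^i_{i,t}$ are bounded thanks to \textbf{(H1)}, \textbf{(H2)} and the regularity of $K^i,\Lambda^i$), plus square-integrable inhomogeneous terms coming from $\xi^i_{i,t}$ and $O^i_{i,t}$. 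The a priori estimate \eqref{estimateX} then gives $X\in\mathcal{S}^2_{\mathbb{F}}$, and reporting this bound in \eqref{best_response} yields $\int_0^T e^{-\rho t}\E[|\alpha^\star_{i,t}|^2]\,dt<\infty$, so that $\alpha_i^\star$ is admissible and induces a genuine open-loop control. A secondary point worth spelling out is the legitimacy of passing from the pointwise-in-$t$ minimization of the square to the minimization of the time-integrated cost; this is justified exactly because $\E[D^{\cbold,i}_t]\geq 0$ and $\E[\mathcal{W}_0^{\cbold,i}]$ is a constant independent of $\alpha_i$.
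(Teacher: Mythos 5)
Your proposal is correct and follows essentially the same route as the paper: verify conditions (i)--(iv) of Lemma \ref{optimPrinciple} using the random field built from the solution of \eqref{sys_coeff_optimControl}, identify \eqref{best_response} as the unique annihilator of the square in \eqref{drift_optimal_control} (your mean/fluctuation decoupling is just a cleaner phrasing of the paper's step of taking expectations and using the invertibility of $\theta^i_t$), and then check admissibility of the resulting feedback control via the a priori estimate on $X$ and the boundedness of the coefficients. No gaps.
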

\begin{proof}
We check that the assumptions of Lemma \ref{lemma_verification} are satisfied. Since $\cali{W}^{\cbold ,i}$ is of the form $\cali{W}^{\cbold,i}_t = w_t\i(X_t^{\cbold}, \E[X_t^{\cbold}])$, condition (i)  is verified. The condition (ii)  is satisfied thanks to the terminal conditions imposed on the system \eqref{sys_coeff_optimControl}. Since $(K\i,\Lambda\i, Y\i, Z\i, R\i)$ is solution to \eqref{sys_coeff_optimControl}, the drift of $t \mapsto \E[\cali{S}^{\cbold,i}]$ is positive for all $i \in \inter$ and all $\cbold \in \cspace$, which implies condition (iii). Finally, for $\cbold \in \cspace$, we see that $\E[D_t^{\cbold,i}] \equiv 0$ for $t \in [0,T]$ and $i \in \inter$ if and only if:
\begin{equation*}
\c_{i,t} - \bar{\c} _{i,t} - a^{i,1}_t(X_t^{\cbold},\E[X_t^{\cbold}])+ \theta^{i \intercal}_t( \bar{\c}_{i,t} - a^{i,0}_t(\E[X_t^{\cbold}]) ) =0  \hspace{1cm} a.s. \hspace{1cm} t \in [0,T]. 
\end{equation*}
Since $\theta\i_t$ is invertible, we get $  \bar{\c}_{i,t} = a^{i,0}_t $ by taking the expectation in the above formula. Thus $\E[D_t^{\cbold,i}] \equiv 0$ for every $i \in \inter$ and $t \in [0,T]$ if and only if $\c_{i,t} = \bar{\c}_{i,t} + a^{i,1}_t= a^{i,1}_t + a^{i,0}_t$ for every $i \in \inter$ and $t \in [0,T]$. For such controls for the players, the condition (iv)  is satisfied.
We now check that $\c_i \in \cali{A}\i$ for every $i \in \inter$ (i.e. it satisfies the square integrability condition). Since $X$ is solution to a linear Mckean-Vlasov dynamics and satisfies the square integrability condition $\E[\sup_{0\leq t \leq T} |X_t|^2] < \infty$, it implies that $\c_i \in L^2_{\mathbb{F}} (\Omega \times [0,T], \R^{d_i})$ since $S\i_i, U\i_i, \hat{S}\i_i, V\i_i$ are bounded and $(O\i_i, \xi\i_i) \in L^2([0,T], \R^{d_i})\times L^2(\Omega \times [0,T], \R^{d_i})$. Therefore $\c_i \in \cali{A}\i$ for every $i \in \inter$. 
\end{proof}

%Let us now find an optimal strategy for player $i$ by cancelling the drift $\E [D^{\cbold, i}_t]$. Following proposition \eqref{prop_nash_eq} we can complete the fourth step and find the \textbf{best-response} function for each player:
%\begin{equation}
%\begin{split}
%\c_{i,t}^\star &= a_t^{i,0}(X_t^\star, \E[X_t^\star]) + a_t^{i,1}(\E[X_t^\star]) \\
%&=- {S}^{i -1}_{i,t} {U}^{i }_{i,t} (X_t^\star-\E[X_t^\star]) - {S}^{i -1}_{i,t} ({\xi}\i_{i,t} - {\bar{\xi}}\i_{i,t}) -\hat{S}_{i,t}^{i -1}(V^{i}_{i,t} \E[X_t^\star] + O^{i}_{i,t}). 
%\end{split}
%\label{best_response}
%\end{equation}

\subsubsection{Step 5:  search for a fixed point}
We now find semi-explicit expressions for the optimal controls of each player. The issue here is the fact that the controls of the other players appear in the best response functions of each player through the vectors $(Y^1, Z^1),...,(Y^n,Z^n)$. To solve this fixed point problem, we first rewrite \eqref{best_response} and the backward equations followed by $\boldsymbol{(Y,Z)} = ((Y^1, Z^1),...,(Y^n,Z^n))$ in the following way (note that we omit the time dependence of the coefficients to make the notations less cluttered):
\begin{equation}
\begin{cases}
\cbstar_t - \bar{\cbold^\star}_t &= \boldsymbol{S_x}(X_t - \bar{X}_t) + \boldsymbol{S_y(Y_t - \bar{Y}_t)} + \boldsymbol{S_z(Z_t - \bar{Z}_t)} + \boldsymbol{H - \bar{H}} \\
\bar{\cbold^\star}_t &= \boldsymbol{\hat{S}_x} \bar{X}_t + \boldsymbol{\hat{S}_y \bar{Y}_t } + \boldsymbol{\hat{S}_z \bar{Z}_t } + \boldsymbol{\bar{\hat{H}}} \\
d \boldsymbol{Y}_t &= \left( \boldsymbol{ P_y(Y_t - \bar{Y}_t) + P_z(Z_t - \bar{Z}_t) + P_{\c}(\c_t - \bar{\c}_t) + F - \bar{F}     }  \right.\\
&+ \left. \boldsymbol{ \hat{P}_y\bar{Y}_t + \hat{P}_z \bar{Z}_t + \hat{P}_{\c} \bar{\c}_t + \bar{\hat{F}}     } \right)dt \\
& + \boldsymbol{Z}_t dW_t,
\end{cases}
\label{initial system}
\end{equation}
where we define 
\begin{align}
\begin{cases}
\boldsymbol{S} &=  \entrepar{ (S_i^{i})^{-1}\1_{i=j}}_{i,j\in\inter} \\
\boldsymbol{\hat{S}} &=  \entrepar{(\hat{S}_i^{i})^{-1}\1_{i=j}}_{i,j\in\inter} \\
\boldsymbol{J} &= \frac{1}{2}\entrepar{ (J\i_{ij} + J\i_{ji})\1_{i\neq j} }_{i,j\in \inter }  \\
\boldsymbol{\hat{J}} &= \frac{1}{2}\entrepar{ (\hat{J}\i_{ij} + \hat{J}\i_{ji})\1_{i\neq j} }_{i,j\in \inter }  \\
\boldsymbol{\cali{J}} &= -\entrepar{I_d +  \boldsymbol{S} \boldsymbol{J}   }^{-1}\boldsymbol{S}  \\
\boldsymbol{\hat{\cali{J}}} &= -\entrepar{I_d +  \boldsymbol{\hat{S}} \boldsymbol{\hat{J}}   }^{-1}\boldsymbol{\hat{S}}  \\
\boldsymbol{S_x} &= \boldsymbol{\cali{J}} \entrepar{U\i_i}_{i \in \inter} \\
\boldsymbol{\hat{S}_x} &= \boldsymbol{\hat{\cali{J}} } \entrepar{V\i_i}_{i \in \inter} \\
\boldsymbol{S_y} &= \boldsymbol{\cali{J}} \entrepar{\1_{i=j}b_i^\intercal}_{i,j \in \inter} \\
\boldsymbol{\hat{S}_y} &= \boldsymbol{\hat{\cali{J}}} \entrepar{ 1_{i=j} \hat{b}_i^\intercal}_{i,j \in \inter} \\
\boldsymbol{S_z} &= \boldsymbol{\cali{J}} \entrepar{\sigma_i^\intercal}_{i \in \inter} \\
\boldsymbol{\hat{S}_z} &= \boldsymbol{\hat{\cali{J}}} \entrepar{\hat{\sigma}_i^\intercal}_{i \in \inter} \\
\boldsymbol{H} &= \boldsymbol{\cali{J}} \entrepar{L\i_i + \sigma^\intercal_i K\i\gamma }_{i \in \inter} \\
\boldsymbol{\hat{H}} &= \boldsymbol{\hat{\cali{J}}} \entrepar{ L\i_i + \hat{\sigma}^\intercal_i K\i\gamma }_{i \in \inter} 
\end{cases}
&
\begin{cases}
\boldsymbol{P_y} &= (\1_{i=j} (U\i_i(S^{i}_i)^{-1}b_i^\intercal - b_x^\intercal+\rho) )_{i,j \in \inter}\\
\boldsymbol{\hat{P}_y} &= (\1_{i=j} (V\i_i (\hat{S}^{i}_i)^{-1} \hat{b}_i^\intercal - \hat{b}_x^\intercal+\rho) )_{i,j \in \inter} \\
\boldsymbol{P_z} &= (\1_{i=j} (U\i_i (S^{i}_i)^{-1} \sigma_i^\intercal - \sigma_x^\intercal) )_{i,j \in \inter} \\
\boldsymbol{\hat{P}_z} &= (\1_{i=j} (V\i_i (S^{i}_i)^{-1} \hat{\sigma}_i^\intercal - \hat{\sigma}_x^\intercal) )_{i,j \in \inter}  \\
\boldsymbol{P_{\c}} &=  -(\1_{i\neq j}(U\i_j + U\i_i (S^{i}_i)^{-1}(J\i_{ij}+J^{i\intercal}_{ji})) )_{i,j \in \inter}\\
\boldsymbol{\hat{P}_{\c}} &= -(\1_{i\neq j}(V\i_j + V\i_i (\hat{S}^{i}_i)^{-1} (\hat{J}\i_{ij}+\hat{J}^{i\intercal}_{ji})) )_{i,j \in \inter}\\
\boldsymbol{F} &= (K\i \beta + \sigma_x^\intercal K\i \gamma )_{i \in \inter}\\
\boldsymbol{\hat{F}} &= (U\i_i (S^{i}_i)^{-1}(L_i+\sigma_i^\intercal K\i \gamma) - L_x - \sigma_x^\intercal K\i \gamma - K\i \beta )_{i \in \inter}. 
\end{cases}
\label{coeff_ansatz}
\end{align}

Now, the strategy  is to propose an ansatz for $t\in [0, T] \mapsto\boldsymbol{Y}_t$ in the form: 
\begin{equation}
\boldsymbol{Y}_t = \pi_t (X_t - \bar{X}_t) + \hat{\pi}_t \bar{X}_t + \eta_t
\label{ansatz}
\end{equation}
where $(\pi, \hat{\pi}, \eta) \in L^{\infty}([0,T], \R^{nd \times d}) \times L^{\infty}([0,T], \R^{nd \times d}) \times \cali{S}_\F^2(\Omega\times [0,T], \R^{nd})$ satisfy:
\begin{equation*}
\begin{cases}
d\eta_t &= \psi_t dt + \phi_t dW_t, \;\;\; \eta_T = \boldsymbol{r} = (r^i)_{i\in \inter} \\
d\pi_t &= \dot{\pi}_t dt, \;\;\; \pi_T = 0 \\
d\hat{\pi}_t &= \dot{\hat{\pi}}_t dt, \;\;\; \hat{\pi}_T =0. 
\end{cases}
\end{equation*}
By applying It\^o's formula to the ansatz we then obtain:
\begin{equation*}
\begin{split}
d\boldsymbol{Y}_t &= \dot{\pi}_t(X_t - \bar{X}_t) dt + \pi_t d(X_t-\bar{X}_t) + \dot{\hat{\pi}}_t\bar{X}_t dt +\hat{\pi}_td\bar{X}_t + \psi_t dt + \phi_t dW  \\
&=  dt \entrecro{\dot{\pi}_t(X_t - \bar{X}_t) + \psi_t - \bar{\psi}_t + \pi_t  \entrepar{\beta - \bar{\beta} + b_x (X_t - \bar{X}_t) +  B (\cbold_t - \bar{\cbold}_t) }  } \\
&+ dt \entrecro{\dot{\hat{\pi}}_t\bar{X}_t + \bar{\psi}_t + \hat{\pi}_t \entrepar{ \bar{\beta} + \hat{b}_x \bar{X}_t +  \hat{B}  \bar{\cbold}_t }} \\
&+ dW_t \entrecro{ \phi_t + \pi_t\entrepar{\gamma + \sigma_x X_t + \tilde{\sigma}_{x} \bar{X}_t + \Sigma \cbold_t + \tilde{\Sigma} \bar{\cbold}_t}  }.
\end{split}
\end{equation*}
By comparing the two It\^o's decompositions of $\boldsymbol{Y}$, we get 
\begin{equation}
\begin{cases}
\boldsymbol{ P_y(Y_t - \bar{Y}_t) + P_z(Z_t - \bar{Z}_t) + P_{\c}(\c_t - \bar{\c}_t) + F - \bar{F}} &= \dot{\pi}_t(X_t-\bar{X}_t) + \psi_t - \bar{\psi}_t\\
& + \pi_t  \entrepar{\beta - \bar{\beta} + b_x (X_t - \bar{X}_t) +  B (\cbold_t - \bar{\cbold}_t) }  \\
\boldsymbol{ \hat{P}_y\bar{Y}_t + \hat{P}_z \bar{Z}_t + \hat{P}_{\c} \bar{\c}_t + \bar{\hat{F}}     } &= 
\entrecro{\dot{\hat{\pi}}_t\bar{X}_t + \bar{\psi}_t + \hat{\pi}_t \entrepar{ \bar{\beta} + \hat{b}_x \bar{X}_t +  \hat{B}  \bar{\cbold}_t }} \\
\boldsymbol{Z}_t &= \entrecro{ \phi_t + \pi_t \entrepar{\gamma + \sigma_x X_t + \tilde{\sigma}_{x} \bar{X}_t + \Sigma \cbold_t + \tilde{\Sigma} \bar{\cbold}_t} }. 
\end{cases}
\label{identification_y}
\end{equation}
We now substitute the $\boldsymbol{Y}$  by its ansatz in the best response equation \eqref{initial system}, and obtain the system:
\begin{equation}
\begin{cases}
(Id - S_z \pi \Sigma) (\cbstar_t - \bar{\cbold^\star}_t) &= (S_x+S_y \pi_t + S_z \pi_t \sigma_x)(X_t - \bar{X}_t)\\
&+ (H-\bar{H} + S_y(\eta_t - \bar{\eta}_t) + S_z(\phi_t - \bar{\phi}_t + \pi_t(\gamma - \bar{\gamma}))) \\      
(Id - \hat{S}_z \pi_t \hat{\Sigma})\bar{\cbold^\star}_t &= (\hat{S}_x+ \hat{S}_y \hat{\pi}_t +\hat{S}_z \pi_t \hat{\sigma}_x)\bar{X}_t + ( \bar{\hat{H}} + \hat{S}_y \bar{\eta}_t + \hat{S}_z (\bar{\phi}_t + \pi_t.  \bar{\gamma})). 
\end{cases}
\label{control_new_style}
\end{equation}
To make the next computations slightly less painful we rewrite \eqref{control_new_style} as 
\bec{
\cbstar_t - \bar{\cbstar}_t &= A_x(X_t - \bar{X}_t) + R_t - \bar{R}_t \\
\bar{\cbstar}_t &= \hat{A}_x\bar{X}_t + \bar{\hat{R}}_t \\
&\textnormal{where} \\
A_x &:= (Id - S_z \pi_t \Sigma)^{-1} (S_x + S_y\pi_t + S_z\pi_t \sigma_x) \\
\hat{A}_x &:= (Id - \hat{S}_z \pi_t \hat{\Sigma})^{-1} (\hat{S}_x + \hat{S}_y\pi_t + \hat{S}_z\pi_t \hat{\sigma}_x) \\
R_t  &:=(Id - S_z \pi_t \Sigma)^{-1} (H+ S_y\eta_t  + S_z(\phi_t  + \pi_t\gamma ))  \\
\hat{R}_t &:= (Id - \hat{S}_z \pi_t \hat{\Sigma})^{-1} ( {\hat{H}} + \hat{S}_y {\eta} + \hat{S}_z (\phi_t + \pi_t {\gamma})). 
\label{feedback_control_complete}}
By injecting \eqref{control_new_style} into \eqref{identification_y} we have:
\begin{equation}
\begin{cases}
0 &= \entrecro{ \dot{\pi}_t +\pi_t b_x-P_y\pi_t-P_z\pi_t(\sigma_x + \Sigma A_x) - (P_{\c}-\pi_t B)A_x }(X_t - \bar{X}_t ) \\
&+ \psi_t - \bar{\psi}_t + \pi_t(\beta - \bar{\beta}) - (P_{\c} - \pi_t B)(R-\bar{R}) - (F-\bar{F})\\
&- P_z(\phi_t - \bar{\phi}_t+ \pi_t(\gamma - \bar{\gamma} + \Sigma(R-\bar{R}) ) ) - P_y(\eta_t - \bar{\eta}_t) \\
0 &= \entrecro{ \dot{\hat{\pi}}_t + \hat{\pi}_t \hat{b}_x - \hat{P}_y\hat{\pi}-\hat{P}_z\pi_t(\hat{\sigma}_x + \hat{\Sigma} \hat{A}_x) - (\hat{P}_{\c}-\hat{\pi}_t \hat{B})\hat{A}_x } \bar{X}_t \\
&+  \bar{\psi}_t + \hat{\pi}_t \bar{\beta} - (\hat{P}_{\c} - \hat{\pi}_t \hat{B})\bar{\hat{R}} - \bar{\hat{F}} - \hat{P}_z( \bar{\phi}_t+ \pi_t( \bar{\gamma} + \hat{\Sigma}\bar{\hat{R}} ) ) - \hat{P}_y \bar{\eta}. 
\end{cases}
\end{equation}
Thus we constrain the coefficients $(\pi, \hat{\pi}, \psi, \phi)$ of the ansatz of $\boldsymbol{Y}$  to satisfy:
\begin{equation}
\begin{cases}
\dot{\pi}_t &= -\pi b_x + P_y\pi_t +P_z\pi_t\sigma_x + (P_{\c} + P_z\Sigma)  (Id - S_z \pi_t \Sigma)^{-1} (S_x + S_y\pi_t + S_z\pi_t \sigma_x)  \\
& -\pi_t B  (Id - S_z \pi_t \Sigma)^{-1} (S_x + S_y\pi_t + S_z\pi_t \sigma_x) \\
\pi_T &= 0 \\
\dot{\hat{\pi}}_t &= -\hat{\pi}_t \hat{b}_x + \hat{P}_y\hat{\pi}_t + \hat{P}_z\pi_t\hat{\sigma}_x + (\hat{P}_{\c} + \hat{P}_z\hat{\Sigma} )(Id - \hat{S}_z \pi_t \hat{\Sigma})^{-1} (\hat{S}_x + \hat{S}_y\hat{\pi}_t + \hat{S}_z\pi_t \hat{\sigma}_x) \\
&-\hat{\pi}_t \hat{B}(Id - \hat{S}_z \pi_t \hat{\Sigma})^{-1} (\hat{S}_x + \hat{S}_y\hat{\pi}_t + \hat{S}_z\pi_t \hat{\sigma}_x)\\
\hat{\pi}_T &= 0 \\
d\eta_t &= \psi_t dt + \phi_t dW \\
\eta_T &= \boldsymbol{r} \\
&\textnormal{where:}\\
\psi_t - \bar{\psi}_t &=  -\pi_t(\beta - \bar{\beta}) + (P_{\c} - \pi_t B)(R-\bar{R}) + (F-\bar{F})\\
&+ P_z(\phi_t - \bar{\phi}_t + \pi_t(\gamma - \bar{\gamma} + \Sigma(R-\bar{R}) ) ) + P_y(\eta_t - \bar{\eta}_t)  \\
\bar{\psi}_t &= - \hat{\pi}_t \bar{\beta} + (\hat{P}_{\c} - \hat{\pi}_t \hat{B})\bar{\hat{R}} + \bar{\hat{F}} + \hat{P}_z( \bar{\phi}_t + \pi_t (\bar{\gamma} + \hat{\Sigma}\bar{\hat{R}} ) ) + \hat{P}_y \bar{\eta}_t \\
R_t  &:=(Id - S_z \pi \Sigma)^{-1} (H+ S_y\eta  + S_z(\phi  + \pi\gamma ))  \\
\hat{R}_t &:= (Id - \hat{S}_z \pi \hat{\Sigma})^{-1} ( {\hat{H}} + \hat{S}_y {\eta} + \hat{S}_z ({\phi} + \pi {\gamma})). 
\end{cases}
\label{sys_fixed_point}
\end{equation}
We now have a feedback form for $\boldsymbol{(Y,Z)} = ((Y^1,Z^1),...,(Y^n,Z^n))$. We can inject it in the best response functions $\cbstar$ in order to obtain the optimal controls in feedback form. We then inject these latter in the state equation in order to obtain an explicit expression of $t \mapsto X_t^\star$.

\subsubsection{Step 6: check the validity}

Let us now check  the existence and uniqueness of  $t \mapsto (K\i_t, \Lambda\i_t, (Y\i_t, Z_t\i), R\i_t, \pi_t, \hat{\pi}_t, (\eta_t, \phi_t))$ where  $K\i \in L^{\infty}([0,T], \mathbb{S}^d_+)$, 
$\Lambda\i \in L^{\infty}([0,T], \mathcal{S}^d_+) $, $(Y\i_t, Z_t\i) \in  \cali{S}^2_{\mathbb{F}}(\Omega\times [0,T],\R^{d}) \times L^2_{\mathbb{F}}(\Omega\times [0,T], \R^{d})$, 
$R\i \in L^{\infty}([0,T], \R)$, $\pi, \hat{\pi} \in L^{\infty}([0,T], \R^{nd \times d})$ and $(\eta, \phi) \in \cali{S}^2_{\mathbb{F}}(\Omega\times [0,T],\R^{nd})  \times L^2_{\mathbb{F}}(\Omega\times [0,T], \R^{nd})$, 
under the assumptions  {\bf (H1)}-{\bf (H2)}. We recall that $t \mapsto (K\i_t, \Lambda\i_t, (Y\i_t, Z_t\i), R\i_t$  and $t\mapsto (\pi_t, \hat{\pi}_t, (\eta_t, \phi_t))$ are solutions respectively to \eqref{sys_coeff_optimControl} and  \eqref{sys_fixed_point}. Fix  $i \in \inter$: 
\begin{enumerate}[label=(\roman*)]
\item We first consider the coefficients $K\i$ which follow Ricatti equations:
\bec{\dot{K}_t\i  +Q\i _t + \sx^\intercal K\i_t \sx + K\i_t \bx + \bx^\intercal K\i_t - \rho K\i_t \\
- ( I_{k,t}\i + \sigma_{k,t}^\intercal K\i_t \sigma_{x,t} + b_{k,t}^\intercal K\i_t)(N\i_{k,t} + \sigma_{k,t}^\intercal K\i_t \sigma_{k,t})^{-1}( I_{k,t}\i + \sigma_{k,t}^\intercal K\i_t \sigma_{x,t} + b_{k,t}^\intercal K\i_t)  &=0 \\
K_T\i &= P\i. }
By standard result in control theory (see \cite{YZ}, Ch. 6, Thm 7.2]) under \textbf{(H1)} and \textbf{(H2)} there exists a unique solution $K\i \in L^{\infty}([0,T], \mathbb{S}^d_+).$
\item Given $K\i$ let us now consider the $\Lambda\i$'s. They also follow Ricatti equations:
\bec{
\dot{\Lambda}\i + \hat{Q}^{iK}_t + \Lambda\i_t \hat{b}_{x,t} + \hat{b}_{x,t}^\intercal \Lambda\i_t - \rho \Lambda\i_t - (\hat{I}^{iK}_t + \hat{b}_{i,t}^\intercal \Lambda \i_t)(\hat{N}^{iK}_t)^{-1} (\hat{I}^{iK}_t + \hat{b}_{i,t}^\intercal \Lambda \i_t)&=0   \\
\Lambda\i_T &= \hat{P}\i}
where we define:
\bec{
\hat{Q}^{iK}_t &:= \hat{Q}\i _t + \hat{\sigma}_{x,t}^\intercal K\i_t \hat{\sigma}_{x,t} \\
\hat{I}^{iK}_t &:= \hat{I}_{i,t}\i + \hat{\sigma}_{i,t}^\intercal  K\i_t \hat{\sigma}_{x,t}\\
\hat{N}^{iK}_t &:= \hat{N}\i_{i,t} + \hat{\sigma}_{i,t}^\intercal K\i_t  \hat{\sigma}_{i,t}. 
}
We need the same arguments as for $K\i$. The only missing argument to conclude the existence and uniqueness of $\Lambda\i$ is: $\hat{Q}^{iK} -(\hat{I}^{iK})^{\intercal} (\hat{N}^{iK})^{-1} \hat{I}^{iK} \geq 0$. As in \cite{BP} we can prove with some algebraic calculations that it is implied by the hypothesis $\hat{Q}^{i} -(\hat{I}^{i})^{\intercal} (\hat{N}^{i})^{-1} \hat{I}^{i} \geq 0$ that we made in {\bf (H2)}. 
\item Given $(K\i, \Lambda\i)$ we now consider the equation for $(Y\i, Z\i)$ which is a linear mean-field BSDE of the form:
\bec{
dY\i_t &= \cali{V}\i_t + \cali{G}^{i}_t(Y\i_t - \E[Y\i_t]) + \cali{\hat{G}}^{i}_t \E[Y\i_t] + \cali{J}^{i}_t(Z\i_t - \E[Z\i_t]) + \cali{\hat{J}}^{i}_t \E[Z\i_t]  + Z\i_t dW_t  \label{sys_coeff_optimControl2} \\ 
Y\i_T &= r\i,
}
where the deterministic coefficients $\cali{G}\i_t, \hat{\cali{G}}\i, \cali{J}\i, \hat{\cali{J}}\i \in L^\infty([0, T], \R^{d\times d})$ and the stochastic process $\cali{V}\i \in L^2([0,T], \R^{d})$ are defined as:
\bec{
\cali{V}\i_t &:= -L\i_{x,t} - \Lambda\i_t \bar{\beta}_t - \sx^\intercal K\i_t \gamma_t - \sxt^\intercal K\i_t \bar{\gamma}_t - K\i_t(\beta_t - \bar{\beta}_t) - \sum_{k \neq i} \entrepar{ U\i_{k,t} (\c_{k,t} - \bar{\c}_{k,t}) + V\i_{k,t} \bar{\c}_{k,t} }  \\
&+ U_{i,t}^{i \intercal} S_{t}^{i-1} (L\i_{k,t} - \E[L\i_{k,t}] + \sigma_{i,t}^\intercal K\i_t ( \gamma_t  - \EE{\gamma_t})  +  \frac{1}{2} \sum_{k \neq i} (J\i_{i,k,t} + J_{k,i,t}^{i\intercal})(\c_{k,t} - \EE{\c_{k,t}} ) ) \\
&+ V_{i,t}^{i \intercal} \hat{S}_{t}^{i-1} (\E[{L}\i_{k,t}] + \hat{\sigma}_{i,t}^\intercal K\i_t - \EE{\gamma_t}  +  \frac{1}{2} \sum_{k \neq i} (\hat{J}\i_{i,k,t} + \hat{J}_{k,i,t}^{i\intercal}) \EE{\c_{k,t}} )  \\
\cali{G}\i_t &:= \rho I_d- \bx^\intercal + U_{i,t}^{i \intercal} S_{i,t}^{i-1} b_{i,t}^\intercal \\
\cali{\hat{G}}\i_t &= \rho I_d- \hat{b}_{x,t}^\intercal + V_{i,t}^{i \intercal} \hat{S}_{i,t}^{i-1} \hat{b}_{i,t}^\intercal \\
\cali{J}\i_t &:= - \sx^\intercal +  U_{i,t}^{i \intercal} S_{i,t}^{i-1} \sigma_{k,t}^\intercal \\
\cali{\hat{J}}\i_t &:= - \hat{\sigma}_{x,t}^\intercal +  V_{i,t}^{i \intercal} \hat{S}_{i,t}^{i-1} \sigma_{k,t}^\intercal.
\label{coeff_Y_bsde}
}
By standard results (see Thm. 2.1 in \cite{Li2017}) we obtain that there exists a unique solution $(Y\i, Z\i) \in \cali{S}_\F^2(\Omega\times [0,T], \R^{d \times d}) \times L_\F^2(\Omega\times [0,T], \R^{d \times d})$ to \eqref{sys_coeff_optimControl2}.
\item Given $(K\i, \Lambda\i, Y\i, Z\i)$ we consider the equation of $R\i$ which is a linear ODE whose solution is given by:
\begin{equation*}
R\i_t = \int_t^T e^{-\rho(s-t)}h\i_s \dd s,
\end{equation*}
where $h\i$ is a deterministic function defined by:
\begin{equation}
\begin{split}
h\i_t &= -\E[\gamma_t^\intercal K\i_t \gamma_t + 2 \beta_t^\intercal Y\i_t + 2 \gamma_t^\intercal Z\i_t 
+\\
& \sum_{k \neq i} (\c[_{k,t}] - \bar{\c}_{k,t})^\intercal S\i_{k,t} (\c[_{k,t}] - \bar{\c}_{k,t}) + \bar{\c}_{k,t}^\intercal \hat{S}\i_{k,t} \bar{\c}_{k,t} +2 [O^{i}_{k,t} + \xi^{i}_{k,t} - \bar{\xi}^{i}_{k,t}]^\intercal \c[_{k,t}]  ] \\
&+ \EE{(\xi\i_{i,t} - \bar{\xi}\i_{i,t})^\intercal S_t^{i-1}(\xi\i_{i,t} - \bar{\xi}\i_t) - O_{i,t}^{i\intercal} \hat{S}_t^{i-1} O_{i,t}\i},
\end{split}
\label{coeff_R}
\end{equation}
and  the expressions of the coefficients are recalled in \eqref{coeffA_a}.
\item The final step is to verify that the procedure to find a fixed point is valid. More precisely we need to ensure that $t \mapsto (\pi_t, \hat{\pi}_t, \eta_t)$ is well defined. It is difficult to ensure the well posedness of $t \mapsto (\pi_t, \hat{\pi}_t)$ for two reasons: first because $\pi$ and $\hat{\pi}$ follow Ricatti equations but are not squared matrices; and second because $\pi$ appears in the equation followed by $\hat{\pi}$. We are not aware of any work addressing this kind of equations in a general setting.
\newline
If we suppose $t \mapsto (\pi_t, \hat{\pi}_t)$ well defined, then $t \mapsto (\eta_t,\phi_t)$ follows a linear mean-field BSDE of the type:
\bec{
d\eta_t &= \entrepar{\cali{V}_t + \cali{G}_t(\eta_t - \E[\eta_t]) + \cali{\hat{G}}_t \EE{\eta_t} + \cali{J}_t(\phi_t - \E[\phi_t]) + \cali{\hat{J}}_t \EE{\phi_t} } dt + \phi_t dW_t\\
\eta_T &= \boldsymbol{r},
\label{bsde_eta}}
where the deterministic coefficients $\cali{G}\i_t, \hat{\cali{G}}\i, \cali{J}\i, \hat{\cali{J}}\i \in L^\infty([0, T], \R^{nd\times nd})$ and the stochastic process $\cali{V}\i \in L^2([0,T], \R^{nd})$ are defined as:
\bec{
\cali{V}_t &:=  -\pi(\beta - \bar{\beta}) + F - \bar{F} + P_z\pi(\gamma - \bar{\gamma}) +  (P_{\c} - \pi B + P_z \pi \Sigma)(I_d - S_z\pi \Sigma)^{-1}(H-\bar{H} + S_z \pi(\gamma - \bar{\gamma})) \\
&- \hat{\pi} \bar{\beta} + \bar{\hat{F}} + \hat{P}_z\pi \bar{\gamma} +  (\hat{P}_{\c} - \hat{\pi} \hat{B} + \hat{P}_z \pi \hat{\Sigma})(I_d - \hat{S}_z\pi \hat{\Sigma})^{-1}(\bar{\hat{H}} +  \hat{S}_z \hat{\pi} \bar{\gamma})\\
\cali{G}_t &:= \entrecro{P_y + (P_{\c} - \pi B + P_z \pi \Sigma)((I_d - S_z\pi \Sigma)^{-1} S_y)} \\
\cali{\hat{G}}_t &= \hat{P}_y \\
\cali{J}_t &:= \entrecro{P_z + (P_{\c} - \pi B + P_z \pi \Sigma)((I_d - S_z\pi \Sigma)^{-1} S_z)}\\
\cali{\hat{J}}_t &:= - \hat{\sigma}_{x,t}^\intercal +  V_{i,t}^{i \intercal} \hat{S}_{i,t}^{i-1} \sigma_{k,t}^\intercal.
}
Again, by standard results (see Thm. 2.1 in \cite{Li2017}) we obtain that there exists a unique solution $(\eta, \phi) \in \cali{S}^2(\Omega\times [0,T], \R^{d \times d}) 
\times L_{\mathbb F}^2(\Omega\times [0,T], \R^{d \times d})$ to \eqref{bsde_eta}.
\end{enumerate}

To sum up the arguments previously presented,  our main result provides the following characterization of the Nash equilibrium:

\begin{theorem}
Suppose assumptions \textbf{(H1)} and \textbf{(H2)}. Suppose also that the system associated with the fixed point search \eqref{sys_fixed_point} is well defined. Then $\cbstar = (\c_1, ...,\c_n)$ defined by 
\begin{equation}
\begin{cases}
\cbstar_t - \bar{\cbold^\star}_t &= \boldsymbol{S_{x,t}}(X_t^\star - \bar{X}_t^\star) + \boldsymbol{S_{y,t}(Y_t - \bar{Y}_t)} + \boldsymbol{S_{z,t}(Z_t - \bar{Z}_t)} + \boldsymbol{H_t - \bar{H}_t} \\
\bar{\cbold^\star}_t &= \boldsymbol{\hat{S}_{x,t}} \bar{X}_t^\star + \boldsymbol{\hat{S}_{y,t} \bar{Y}_t } + \boldsymbol{\hat{S}_{z,t} \bar{Z}_t } + \boldsymbol{\bar{\hat{H}}_{t}} 
\end{cases}
\end{equation}
where  $\boldsymbol{S_x, S_y, S_z, H, \hat{S}_x, \hat{S}_y, \hat{S}_z, \hat{H}}$ are defined in \eqref{coeff_ansatz}, $\boldsymbol{(Y,Z)}$ are in $\mathcal{S}_{\mathbb{F}}^{2}(\Omega \times [0,T], \R^{nd\times d}) \times L^{2}_{\mathbb{F}}( \Omega \times [0,T], \R^{nd\times d})$ and satisfy \eqref{ansatz} and \eqref{sys_fixed_point}, is a Nash equilibrium.
\label{theorem_nash_eq}
\end{theorem}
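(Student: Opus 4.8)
The plan is to read the theorem as the synthesis of the best-response characterization of Proposition \ref{prop_nash_eq} with the fixed-point resolution carried out in Step 5, assembling the ingredients built up in Steps 1--6. First I would record the well-posedness of all the coefficients. Under \textbf{(H1)}--\textbf{(H2)}, Step 6 yields unique solutions $K\i, \Lambda\i \in L^{\infty}([0,T], \mathbb{S}^d_+)$ to the Riccati equations in \eqref{sys_coeff_optimControl}. Because the theorem \emph{assumes} that the fixed-point system \eqref{sys_fixed_point} is well defined --- that is, that the non-square matrix processes $(\pi, \hat{\pi})$ exist with the stated $L^{\infty}$ regularity --- the remaining pair $(\eta, \phi)$ is then the unique solution of the linear mean-field BSDE \eqref{bsde_eta}, by the standard result invoked in Step 6. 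The aggregate adjoint $\boldsymbol{(Y,Z)}$ is recovered from the ansatz \eqref{ansatz} together with the identification \eqref{identification_y}, so every coefficient entering the feedback form of the statement is well defined and enjoys the required integrability.

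Second, I would verify the fixed-point consistency, which is the crux of the argument. By Proposition \ref{prop_nash_eq}, for a frozen profile $\alpha^{-i}$ the best response of player $i$ is \eqref{best_response}; stacking the $n$ players produces the coupled system \eqref{initial system}, in which $\boldsymbol{(Y,Z)}$ still solve their backward equations. The key point is that the ansatz \eqref{ansatz}, $\boldsymbol{Y}_t = \pi_t (X_t - \bar{X}_t) + \hat{\pi}_t \bar{X}_t + \eta_t$, is an exact representation: comparing the two It\^o decompositions of $\boldsymbol{Y}$ gives \eqref{identification_y}, and inserting the ansatz into \eqref{initial system} yields \eqref{control_new_style}, whose inversion --- requiring the invertibility of $Id - S_z \pi_t \Sigma$ and $Id - \hat{S}_z \pi_t \hat{\Sigma}$ --- produces the feedback controls \eqref{feedback_control_complete}. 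Precisely because $(\pi, \hat{\pi}, \eta, \phi)$ are selected to solve \eqref{sys_fixed_point}, the reconstructed $\boldsymbol{Y}$ satisfies its own backward equation, so the controls displayed in the statement simultaneously realise every player's best response, i.e. $\cbstar = BR(\cbstar)$.

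Third, I would close the loop through the verification lemma. Fix $i \in \inter$: when the other players use $\alpha^{*,-i}$, the control $\c_i^\star$ coincides with the best response \eqref{best_response}, so by Proposition \ref{prop_nash_eq} it attains $V^i(\alpha^{*,-i})$ --- equivalently the drift $\E[D_t^{\cbstar,i}]$ vanishes and condition (iv) of Lemma \ref{optimPrinciple} holds, giving $J\i(\cbstar) = V^i(\alpha^{*,-i})$. As this holds for every $i$, $\cbstar$ is a Nash equilibrium. Admissibility $\cbstar \in \cspace$ follows exactly as in Proposition \ref{prop_nash_eq}: the feedback gains $\boldsymbol{S_x}, \boldsymbol{S_y}, \boldsymbol{S_z}, \boldsymbol{\hat{S}_x}, \boldsymbol{\hat{S}_y}, \boldsymbol{\hat{S}_z}$ are bounded, the inhomogeneous terms $(\boldsymbol{H}, \eta, \phi)$ are square-integrable, and the closed-loop state inherits the moment bound \eqref{estimateX}, whence each $\c_i^\star \in L^2_{\mathbb{F}}(\Omega \times [0,T], \R^{d_i})$.

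The step I expect to be the genuine obstacle is the second one: justifying that the linear ansatz \eqref{ansatz} is not merely a convenient guess but is compatible with the coupled best-response system, and that the matrices $Id - S_z \pi_t \Sigma$ and $Id - \hat{S}_z \pi_t \hat{\Sigma}$ are invertible for every $t$ so that \eqref{control_new_style} can be solved. The well-posedness of the Riccati-type equations for the non-square processes $(\pi, \hat{\pi})$ is exactly what the hypothesis sidesteps, so the argument must invoke only that assumption and claim no more than the internal consistency of the ansatz once $(\pi, \hat{\pi})$ are granted.
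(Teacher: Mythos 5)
Your proposal is correct and follows essentially the same route as the paper: the theorem is presented there as a summary of Steps 1--6, i.e.\ the verification Lemma \ref{optimPrinciple}, the best-response Proposition \ref{prop_nash_eq}, the ansatz-based fixed-point resolution \eqref{ansatz}--\eqref{sys_fixed_point}, and the well-posedness discussion of Step 6, which is exactly the skeleton you assemble. You also correctly identify that the existence of $(\pi,\hat{\pi})$ and the invertibility of $Id - S_z\pi_t\Sigma$ and $Id - \hat{S}_z\pi_t\hat{\Sigma}$ are precisely what the hypothesis ``the system \eqref{sys_fixed_point} is well defined'' is meant to grant, matching the paper's own treatment.
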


%%%%%%%%% END: COMPUTING A NASH EQUILIBRIUM %%%%%%%%% 

%%%%%%%%% INFINITE HORIZON %%%%%%%%% 
\section{Some extensions} \label{secexten}

\subsection{The case of infinite horizon}

Let us now tackle the infinite horizon case. The method is similar to the finite-horizon case but some adaptations are needed when dealing with the well posedness of $(K\i, \Lambda\i, Y\i, R\i)$ and the admissibility of the controls. 

We redefine the set of admissible controls for for each player $i \in \inter $ as:

\begin{equation}
\mathcal{A}\i = \left\{ \c : \Omega \times [0,T] \to \R^{d_i}  \;  \text{ s.t. \;} \c \text{\;is $\mathbb{F}$-adapted and} \int_0^\infty e^{-\rho u} \E[|\c_u|^2] du < \infty   \right\}
\end{equation}
while the controlled state defined on $\R_+$ now follows a dynamics of the form 
\bec{
d\X &= b \args dt + \sigma \args dW_t \\
X_0^{\c} &= X_0
\label{dynamic_inf}
}
where  for each $t \in [0,T]$, $x, \bar{x} \in \R^d$, $a_i, \bar{a}_i \in \R^{d_i}$:
\bec{
b(t,x,\bar{x}, \boldsymbol{a}, \bar{\boldsymbol{a}}) &= \beta_t + b_x x + \tilde{b}_{x}\bar{x} + \sum_{i=1}^n b_{i} a_{i} + \tilde{b}_{i} \bar{a}_{i}  \\
&= \beta_t + b_x x + \tilde{b}_{x}\bar{x} + B \boldsymbol{a}_t + \tilde{B} \bar{\boldsymbol{a}}_t \\
\sigma (t,x,\bar{x}, \boldsymbol{a}, \bar{\boldsymbol{a}})  &= \gamma_t + \sigma_x x + \tilde{\sigma}_{x}\bar{x} + \sum_{i=1}^n \sigma_{i} a_{i} + \tilde{\sigma}_{i} \bar{a}_{i} \\
&= \gamma_t + \sigma_x x + \tilde{\sigma}_{x}\bar{x} + \Sigma \boldsymbol{a} + \tilde{\Sigma} \bar{\boldsymbol{a}}.
\label{coefX_inf}
}

Notice that now only the coefficients $\beta$ and $\gamma$ are allowed to be stochastic processes. The other linear coefficients are constant matrices.

The goal of each player $i \in \inter $ during the game is still to minimize her  cost functional with respect to control $\c_i$ over $\mathcal{A}\i$, and given control $\alpha^{-i}$ of the other players: 
\begin{equation}
J\i(\alpha_i,\alpha^{-i})  = \E \left[ \int_0^\infty e^{-\rho t} f\i \args  dt   \right]
\label{J_inf}
\end{equation}
where for each $t \in [0,T]$, $x, \bar{x} \in \R^d$, $a_i, \bar{a}_i \in \R^{d_i}$ we have set the running cost for each player as:
\begin{equation}
\begin{cases}
f\i (t,x,\bar{x}, \boldsymbol{a}, \boldsymbol{\bar{a}}) &= (x-\xbar )^\intercal Q\i (x-\xbar ) + \xbar ^\intercal[Q\i  + \tilde{Q}\i] \xbar  \\
& + \sum_{k=1}^n a_k^\intercal I_{k}\i (x-\bar{x}) + \bar{a}_k^\intercal (I_{k}\i + \tilde{I}_{k}\i ) \bar{x}  \\
& + \sum_{k=1}^n (a_k - \bar{a}_k)^\intercal N_{k}\i (a_k - \bar{a}_k) + \bar{a}_k (N_{k}\i + \tilde{N}_{k,}\i ) \bar{a}_k \\
& +  \sum_{0 \le k \ne l \le n} (a_k - \bar{a}_k)^\intercal G\i_{k,l} (a_l - \bar{a}_l) +  a_k^\intercal ( G\i_{k,l} + \tilde{G}_{k,l}\i  )a_l\\
& + 2[L_{x,t}\i[\intercal] x + \sum_{k=1}^n L_{k,t}^{i\intercal} a_k]. 
\end{cases}
\label{coefJ_inf}
\end{equation}

Note that the only coefficients that we allow to be time dependent are $L\i_x$ and $L\i_k$ for $k \in \inter$ which may be stochastic processes.

\subsubsection{Assumptions}

We detail below the new assumptions:

\vspace{2mm}

\noindent \textbf{(H1')} The coefficients in \eqref{coefX_inf} satisfy:
\begin{enumerate}[label= \alph*),leftmargin=2cm ,parsep=0cm,itemsep=0cm,topsep=0cm]
    \item $\beta, \gamma \in L^{2}_{\mathbb{F}}( \Omega \times [0,T], \R^d)  $
    \item $b_x, \tilde{b}_x, \sigma_x, \tilde{\sigma}_x  \in  \R^{d\times d} $; $b_i, \tilde{b}_i, \sigma_i, \tilde{\sigma}_i \in \R^{d\times d_i}$
\label{H1'}
\end{enumerate}
\noindent
\textbf{(H2')} The coefficients of the cost functional \eqref{coefJ} satisfy:
\begin{enumerate}[label= \alph*),leftmargin=2cm ,parsep=0cm,itemsep=0cm,topsep=0cm]
  \item $Q\i, \tilde{Q}\i \in \mathbb{S}^d_+$; $N\i_k, \tilde{N}_k\i \in \mathbb{S}^{d_k}_+$; $I\i_k, \tilde{I}_k\i \in \R^{d_k \times d}$
  \item $L_x\i \in L^{2}_{\mathbb{F}}( \Omega \times \R^\star_+, \R^d)$, $L_k\i \in L^{2}_{\mathbb{F}}( \Omega \times \R^\star_+, \R^{d_k})$
  \item 
  $N\i_{k} > 0\;\;\;\;\;  Q\i - I_{i}^{i\intercal}(N_{i}^{i})^{-1} I_{i}^{i} \geq 0 $
   \item 
  $\hat{N}\i_{k,t} > 0 \;\;\;\;\;  \hat{Q}\i - \hat{I}_{i}^{i\intercal}(\hat{N}_{i}^{i})^{-1} \hat{I}_{i}^{i} \geq 0$
\label{H2'}
\end{enumerate}
\noindent
\textbf{(H3')} $\rho > 2\entrepar{|b_x|+|\tilde{b}_x|+8(|\sigma_x|^2 + |\tilde{\sigma}_x|^2)}$

As shown below, the new hypothesis \textbf{(H3')} ensure the well posedness of our problem. Notice first that by \textbf{(H1')} and classical results, there exists a unique strong solution $X^{\c}$ to the SDE \eqref{dynamic_inf}. Furthermore by \textbf{(H1')} and \textbf{(H3')} we obtain by similar arguments as in \cite{BP}  the following estimate: 
\begin{equation}
    \int_0^\infty e^{-\rho u} \E[|X_u^{\cbold}|^2]  du \leq C_{\c}(1+\E[|X_0|^2])<\infty, 
    \label{estimate_X_inf}
\end{equation}
in which $C_{\c}$ is a constant depending on $\cbold = (\c_1,...,\c_n)$ only through $\int_0^\infty e^{-\rho} \E[|\cbold_u|^2]\dd u$. Finally by \textbf{(H2')} and \eqref{estimate_X_inf} the minimizing problem \eqref{J_inf} is well defined for each player.

\subsubsection{A weak submartingale optimality principle on infinite horizon}
We now give an easy adaptation of the weak submartingale optimality principle in the case of infinite horizon. 

\begin{lemma}[Weak submartingale optimality principle]
\label{optimPrinciple_inf}
Suppose there exists a couple \\$(\cbold^\star , (\cali{W}^{.,i})_{i \in \inter })$,
where $\cbold^\star \in \cspace$ and  $\cali{W}^{.,i}=\{ \cali{W}^{\cbold,i}_t, t \in \R^\star_+, \cbold \in \cspace  \}$ is a family of adapted processes indexed by $\cspace$ for each $i\in \inter$, such that:

\begin{enumerate}[label=\roman*]
\item[(i)] For every $\cbold \in \cspace$, $\E[\cali{W}^{\cbold,i}_0]$ is independent of the control $\c_i \in \cspace[i]$; \label{item1_inf}
\item[(ii)] For every $\cbold \in \cspace$, $\lim_{t \to \infty} e^{-\rho t}\EE{\cali{W}_t^{\cbold,i}} = 0$; \label{item2_inf}
\item[(iii)] For every $\cbold \in \cspace$, the map $t \in \R^\star_+ \mapsto \EE{\cali{S}_t^{\cbold,i} } $, with \\ $\cali{S}_t^{\cbold,i} = e^{-\rho t } \cali{W}^{\cbold,i}_t + \int_0^t e^{-\rho u } f\i(u, X_u^{\cbold}, \P_{X_u^{\cbold}}, \cbold_u, \P_{\cbold_u}) du $ is well defined and non-decreasing; \label{item3_inf}
\item[(iv)]  The map $t \mapsto \E[\cali{S}_t^{\cbstar,i}]$ is constant for every $t \in \R^\star_+$; \label{item4_inf}
\end{enumerate}
Then $\cbold^\star$ is a Nash equilibrium and $J\i(\cbstar) = \E[\cali{W}_0^{\cbstar,i}]$. Moreover, any other Nash-equilibrium $\tilde{\cbold}$ such that $\E[\cali{W}_0^{\tilde{\cbold},i}] = \E[\cali{W}_0^{{\cbold^\star},i}]$ and $J\i(\tilde{\cbold}) = J\i({\cbold^\star})$ for any $i \in \inter$ satisfies the condition (iv). 
\label{lemma_verification_inf}
\end{lemma}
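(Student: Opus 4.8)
The plan is to follow verbatim the argument of Lemma~\ref{optimPrinciple}, the only change being that the terminal time $T$ is replaced by a passage to the limit $t\to\infty$, with condition~(ii) now playing the role that the terminal identity~(ii) played in the finite-horizon lemma. Fix $i\in\inter$. The first and central step I would carry out is to identify, for every $\cbold\in\cspace$, the limit of the map $t\mapsto\E[\cali{S}_t^{\cbold,i}]$. Writing $\E[\cali{S}_t^{\cbold,i}] = e^{-\rho t}\E[\cali{W}_t^{\cbold,i}] + \E\big[\int_0^t e^{-\rho u} f\i\,\du\big]$, the boundary term vanishes as $t\to\infty$ by~(ii), while the integral term converges to $J\i(\cbold)$ by the very definition of the infinite-horizon cost~\eqref{J_inf}. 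This yields
\begin{equation}
\lim_{t\to\infty}\E[\cali{S}_t^{\cbold,i}] \;=\; J\i(\cbold), \qquad \cbold\in\cspace,
\end{equation}
and the limit is a genuine (finite) limit precisely because the map is non-decreasing by~(iii) and bounded above by the finite number $J\i(\cbold)$.

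With this identity in hand the Nash property follows exactly as in the finite case. For arbitrary $\c_i\in\cspace[i]$, monotonicity of $t\mapsto\E[\cali{S}_t^{(\c_i,\cbold^{\star,-i}),i}]$ combined with the limit above gives
\begin{equation}
\begin{split}
\E[\cali{W}_0^{(\c_i,\cbold^{\star,-i}),i}] &= \E[\cali{S}_0^{(\c_i,\cbold^{\star,-i}),i}] \\
&\le\; \lim_{t\to\infty}\E[\cali{S}_t^{(\c_i,\cbold^{\star,-i}),i}] \;=\; J\i(\c_i,\cbold^{\star,-i}).
\end{split}
\end{equation}
Taking $\c_i=\c_i^\star$ and invoking the constancy in~(iv) turns this inequality into an equality, so that $\E[\cali{W}_0^{\cbstar,i}] = J\i(\cbstar)$; finally condition~(i) guarantees that $\E[\cali{W}_0^{(\c_i,\cbold^{\star,-i}),i}]$ does not depend on $\c_i$ and therefore equals $\E[\cali{W}_0^{\cbstar,i}]$. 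Chaining these relations gives $J\i(\cbstar)\le J\i(\c_i,\cbold^{\star,-i})$ for all $\c_i\in\cspace[i]$ and all $i\in\inter$, i.e. $\cbstar$ is a Nash equilibrium of value $\E[\cali{W}_0^{\cbstar,i}]$. For the statement on another equilibrium $\tilde{\cbold}$, the chain of equalities $\E[\cali{S}_0^{\tilde{\cbold},i}] = \E[\cali{W}_0^{\tilde{\cbold},i}] = \E[\cali{W}_0^{\cbstar,i}] = J\i(\cbstar) = J\i(\tilde{\cbold}) = \lim_{t\to\infty}\E[\cali{S}_t^{\tilde{\cbold},i}]$ forces the non-decreasing map $t\mapsto\E[\cali{S}_t^{\tilde{\cbold},i}]$ to be constant, which is~(iv).

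The one genuinely new point, and the step I expect to demand the most care, is justifying both the convergence $\lim_{t\to\infty}\E[\int_0^t e^{-\rho u} f\i\,\du] = J\i(\cbold)$ and the finiteness of the monotone limit. Unlike in finite horizon this is not automatic: it must be secured from the integrability estimate~\eqref{estimate_X_inf} (itself a consequence of \textbf{(H3')}) together with the quadratic growth of $f\i$ controlled by \textbf{(H2')}, so that $u\mapsto e^{-\rho u} f\i$ is integrable on $\R_+$ and dominated convergence applies, the decay in~(ii) killing the boundary term. Once this integrability bookkeeping is done, every remaining manipulation is a line-by-line transcription of the finite-horizon proof with $T$ replaced by the limit $t\to\infty$.
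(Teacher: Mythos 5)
Your proposal is correct and follows essentially the same route as the paper, which simply states that the proof is identical to that of the finite-horizon Lemma \ref{optimPrinciple}; your adaptation of replacing the terminal identity $\E[\cali{S}_T^{\cbold,i}]=J\i(\cbold)$ by the limit $\lim_{t\to\infty}\E[\cali{S}_t^{\cbold,i}]=J\i(\cbold)$ via condition (ii) is exactly the intended modification. You also correctly flag the one point the paper leaves implicit, namely that the integrability coming from \textbf{(H2')} and \eqref{estimate_X_inf} is what makes the limit finite and the identification with $J\i(\cbold)$ legitimate.
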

\begin{proof}
The proof is exactly the same as in Lemma \ref{optimPrinciple}. 
\end{proof}

Let us now describe the steps to follow in order to apply Lemma \ref{lemma_verification_inf}. Since they are similar to the ones in the finite-horizon case, we only report the main changes. 
\newline
\textbf{Steps 1-3}
\newline
For each player $i \in \inter$ we still search for a random field $\{w_t\i(x,\bar{x}),t\in [0,T], x,\bar{x} \in \R^d \}$ of the form $w_t\i(x,\bar{x}) =  \scal{K_t\i}{(x-\bar{x})} + \scal{\Lambda_t\i }{\bar{x}} +2 Y_t^{i \intercal}x + R\i_t$ for which the optimality principle  in Lemma \ref{lemma_verification_inf} now leads to the system:
\begin{equation}
\begin{cases}
dK\i_t = -\Phi^{i0}_t dt \\
d\Lambda\i_t = -\Psi^{i0}_t dt  \\
dY_t\i = -\Delta^{i0}_t dt + Z_t\i dW_t  \\
dR_t\i = -\Gamma_t^{i0} dt, \;\;\;  t \geq 0.  
\end{cases}
\label{sys_coeff_optimControl_inf}
\end{equation}
Notice that there are no terminal conditions anymore since we are in the infinite horizon case. The coefficients $\Phi^{i0}, \Psi^{i0}, \Delta^{i0}_t, \Gamma_t^{i0}$ are defined in \eqref{coeff_0}. The fourth step is exactly the same as in the finite horizon case. 
\newline
\textbf{Step 5}
\newline
We now search for a fixed point of the best response functions. Let us define $\boldsymbol{Y}= (Y^1,...Y^n)$ and propose an ansatz in a feedback form $Y$: 
$Y_t = \pi(X_t - \EE{X_t}) + \hat{\pi}\EE{X_t} + \eta_t$ where $\pi,\hat{\pi} \in L^{\infty}(\R_+, \R^{nd \times d})$ and $\eta \in L^{2}_{\mathbb{F}}( \Omega \times \R_+, \R^{nd})$ satisfy 
\begin{equation}
\begin{cases}
0 &= -\pi b_x + P_y\pi +P_z\pi \sigma_x + (P_{\c} + P_z\Sigma)  (Id - S_z \pi \Sigma)^{-1} (S_x + S_y\pi + S_z\pi_t \sigma_x)  \\
&-\pi B  (Id - S_z \pi \Sigma)^{-1} (S_x + S_y\pi + S_z\pi \sigma_x) \\
0 &= -\hat{\pi} \hat{b}_x + \hat{P}_y\hat{\pi} + \hat{P}_z\pi \hat{\sigma}_x + (\hat{P}_{\c} + \hat{P}_z\hat{\Sigma} )(Id - \hat{S}_z \pi \hat{\Sigma})^{-1} (\hat{S}_x + \hat{S}_y\hat{\pi} + \hat{S}_z\pi \hat{\sigma}_x) \\
&-\hat{\pi} \hat{B}(Id - \hat{S}_z \pi \hat{\Sigma})^{-1} (\hat{S}_x + \hat{S}_y\hat{\pi} + \hat{S}_z\pi \hat{\sigma}_x)\\
d\eta_t &= \psi_t dt + \phi_t dW_t \\
&\textnormal{with:}\\
\psi_t - \bar{\psi}_t &=  -\pi_t(\beta - \bar{\beta}) + (P_{\c} - \pi_t B)(R-\bar{R}) + (F-\bar{F}) + \\
&P_z(\phi_t - \bar{\phi}_t+ \pi_t(\gamma - \bar{\gamma} + \Sigma(R_t-\bar{R}_t) ) ) + P_y(\eta_t - \bar{\eta}_t)  \\
\bar{\psi}_t &= - \hat{\pi}_t \bar{\beta} + (\hat{P}_{\c} - \hat{\pi}_t \hat{B})\bar{\hat{R}}_t + \bar{\hat{F}} + \hat{P}_z( \bar{\phi}+ \pi_t( \bar{\gamma} + \hat{\Sigma}\bar{\hat{R}}_t ) ) + \hat{P}_y \bar{\eta}_t \\
R_t  &:=(Id - S_z \pi_t \Sigma)^{-1} (H+ S_y\eta_t  + S_z(\phi_t  + \pi_t\gamma ))  \\
\hat{R}_t &:= (Id - \hat{S}_z \pi_t \hat{\Sigma})^{-1} ( {\hat{H}} + \hat{S}_y {\eta}_t + \hat{S}_z ({\phi}_t + \pi_t {\gamma}))  \\
\end{cases}
\label{system_ansatz_inf}
\end{equation}
where the coefficients are defined in \eqref{coeff_ansatz}.
\newline
\textbf{Step 6}
\newline
We finally tackle the well-posedness of \eqref{sys_coeff_optimControl_inf} and \eqref{system_ansatz_inf}.
\begin{enumerate}[label=(\roman*)]
\item We first consider the ODE for $K\i$. Since the map $(t,k) \mapsto \phi_t^{i0}(k) $ does not depend on time (all the coefficient being constant) we search for a constant non-negative matrix 
$K\i \in {\mathbb S}^d$ satisfying $\Phi^{i0}(K\i)=0$, more precisely solution to:
\begin{equation}
\begin{split}
&Q\i + \sigma_x^\intercal K\i \sigma_x + K\i b_x + b_x^\intercal K\i - \rho K\i  \\
& - ( I_{k}\i + \sigma_{k}^\intercal K\i \sigma_{x} + b_{k}^\intercal K\i)(N\i_{k} + \sigma_{k}^\intercal K\i \sigma_{k})^{-1}( I_{k}\i + \sigma_{k}^\intercal K\i \sigma_{x} + 
b_{k}^\intercal K\i)  = 0.  
\label{eq_K_inf}
\end{split}
\end{equation}  
As in \cite{BP} we can show using a limit argument that there exists $K\i \in {\mathbb S}_+^d$  solution to \eqref{eq_K_inf}. The argument for $\Lambda\i$ is the same as for $K\i$.
\item Given $(K\i, \Lambda\i)$ the equation for $(Y\i,Z\i)$ is a linear mean-field BSDE on infinite horizon:
\bes{
dY\i_t &= \cali{V}\i_t + \cali{G}^{i}(Y\i_t - \E[Y\i_t]) + \cali{\hat{G}}^{i} \E[Y\i_t] + \cali{J}^{i}(Z\i_t - \E[Z\i_t]) + \cali{\hat{J}}^{i} \E[Z\i_t]  + Z\i_t dW_t,  
}
where the coefficient are defined in \eqref{coeff_Y_bsde}. Notice that now $\cali{G}^{i},\cali{\hat{G}}^{i}, \cali{J}^{i}, \cali{\hat{J}}^{i} $ are all constant matrices. To the best of our knowledge, there are no general results ensuring the existence for such equation. We then add the following assumption:

\vspace{2mm}

\textbf{(H4')} There exists a solution $(Y\i, Z\i) \in \times \mathcal{S}_{\mathbb{F}}^{2}(\Omega \times \R_+, \R^d) \times L_{\mathbb{F}}^{2}(\Omega \times \R_+, \R^d) $

\item Given $(K\i, \Lambda\i, Y\i, Z\i)$ the equation for $R\i$ is a linear ODE whose solution is:
\begin{equation*}
R\i_t = \int_t^\infty e^{-\rho(s-t)}h\i_s \; ds,
\end{equation*}
where $h\i$ is a deterministic function defined in \eqref{coeff_R}.

\item We now study the well posedness of the fixed point procedure. More precisely we need to ensure that the process $t \to (\pi, \hat{\pi}, \eta_t)$ defined as a solution of the system \eqref{system_ansatz_inf}, recalled below, is well defined. Note that in the infinite horizon framework we search for constant $\pi$ and $\hat{\pi}$.
\begin{equation}
\begin{cases}
0 &= -\pi b_x + P_y\pi +P_z\pi\sigma_x + (P_{\c} + P_z\Sigma)  (Id - S_z \pi \Sigma)^{-1} (S_x + S_y\pi + S_z\pi \sigma_x)  \\
&-\pi B  (Id - S_z \pi \Sigma)^{-1} (S_x + S_y\pi + S_z\pi \sigma_x) \\
0 &= -\hat{\pi} \hat{b}_x + \hat{P}_y\hat{\pi} + \hat{P}_z\pi \hat{\sigma}_x + (\hat{P}_{\c} + \hat{P}_z\hat{\Sigma} )(Id - \hat{S}_z \pi \hat{\Sigma})^{-1} (\hat{S}_x + \hat{S}_y\hat{\pi} + \hat{S}_z\pi \hat{\sigma}_x) \\
&-\hat{\pi} \hat{B}(Id - \hat{S}_z \pi \hat{\Sigma})^{-1} (\hat{S}_x + \hat{S}_y\hat{\pi} + \hat{S}_z\pi \hat{\sigma}_x) \\
d\eta_t &= \psi_t dt + \phi_t dW_t, \;\;\; t \geq 0.  
\label{system_ansatz_inf_bis}
\end{cases}
\end{equation}
Existence of $(\pi, \hat{\pi})$ in whole generality is a difficult problem. Let us first rewrite the system \eqref{system_ansatz_inf_bis} as:
\begin{equation}
F((\pi, \hat{\pi}), \cali{C}) = 0
\end{equation}
Where $\cali{C} = (b_x, \sigma_x, B, \Sigma ,\hat{b}_x, \hat{\sigma}_x, \hat{B}, \hat{\Sigma}, S_x, S_y, S_z, \hat{S}_x, \hat{S}_y, \hat{S}_z, P_y, P_z, P_{\c}, \hat{P}_y, \hat{P}_z, \hat{P}_{\c})$. Note that $F$ is continuously differentiable on its domain of definition. Thus, if $\entrecro{\frac{\partial F}{\partial \pi}, \frac{\partial F}{\partial \hat{\pi}}}(\pi, \hat{\pi}, \cali{C})$ is invertible for, then, by the implicit function theorem, there exists an open set $U$ containing $\cali{C}$ and a continuously differentiable function $g : U \mapsto (\R^{nd \times d})^2$ such that for all admissible coefficients $\cali{C} \in U$: $F(g(\cali{C}), \cali{C}) = 0$ and the solutions $(\pi, \hat{\pi}) = g(\cali{C})$ are unique. It means that if we find a solution to \eqref{system_ansatz_inf_bis} while the condition $\entrecro{\frac{\partial F}{\partial \pi}, \frac{\partial F}{\partial \hat{\pi}}}$ is invertible, then for small perturbations on the coefficients we still have solutions for $(\pi, \hat{\pi})$.

Let us now give sufficient conditions to ensure the existence of $(\pi, \hat{\pi})$ in a simplified setting where the state $t\to X_t$ belongs to $\R$ and all the players are symmetric in the sense that all the coefficients associated with each player are equals ($b_1 = ... = b_2, Q^1=...=Q^n, etc. $). We suppose also that the volatility is not controlled i.e. $\sigma_i = 0$ for all $i \in \inter$. In such a case $\pi, \hat{\pi} \in \R^{n\times 1}$, $\pi_1 = ... = \pi_n$, $\hat{\pi}_n = ... = \hat{\pi}_n$ and the systems \eqref{system_ansatz_inf_bis} of coupled equations now reduces to two coupled second order equations:
\begin{equation}
\begin{cases}
\pi_1^2 \entrecro{n b_1 S_{y,1}} + \pi_1 \entrecro{-b_x + P_{y,1}+P_{z,1}\sigma_x + \sum_{j\neq1} P_{\c,1,j}S_{y,1} - n b_1S_{x,1} } + \sum_{j\neq1} P_{\c,1,j} S_{x,j}  = 0 \\
\hat{\pi}_1^2 \entrecro{n \hat{b}_1 \hat{S}_{y,1}} + \hat{\pi}_1 \entrecro{-\hat{b}_x + \hat{P}_{y,1} + \sum_{j\neq1} \hat{P}_{\c,1,j} \hat{S}_{y,1} - n \hat{b}_1 \hat{S}_{x,1} } + \sum_{j\neq1} \hat{P}_{\c,1,j} \hat{S}_{x,j} + \pi_1 \hat{P}_{z,1} \hat{\sigma}_x = 0.  
\end{cases}
\label{condidtion_existence}
\end{equation}
If we note:
\bec{
a &:= n b_1 S_{y,1}\\
b &:= -b_x + P_{y,1}+P_{z,1}\sigma_x + \sum_{j\neq1} P_{\c,1,j} S_{y,1} - n b_1 S_{x,1}\\
c &:= \sum_{j\neq1} P_{\c,1,j} S_{x,j},
}
then a sufficient condition for $\pi_1$ to exists is simply: $b^2 - 4ac \geq 0$. Since $a\leq 0$ and $c \geq 0$ we have two possibilities a priori for $\pi_1$. We choose the positive one to ensure that $\cbold \in \cspace$. Then if we note:
\bec{
\hat{a} &:= n \hat{b}_1 \hat{S}_{y,1} \\
\hat{b} &:= -\hat{b}_x + \hat{P}_{y,1} + \sum_{j\neq1} \hat{P}_{\c,1,j} \hat{S}_{y,1} - n \hat{b}_1 \hat{S}_{x,1}\\
\hat{c}(\pi_1) &:= \sum_{j\neq1} \hat{P}_{\c,1,j} \hat{S}_{x,j} + \pi_1\hat{P}_{z,1} \hat{\sigma}_x,  
}
a sufficient condition for $\hat{\pi}_1$ to exist is $\hat{b}^2 - 4\hat{a}\hat{c}(\pi_1) \geq 0$. To ensure that there is a positive solution we also need $\hat{c}(\pi_1) \geq 0$.

\item Let us finally verify that $\cbstar \in \cspace$. Let us consider the candidate for the optimal control for each player:
\begin{equation*}
\begin{split}
\cbstar - \bar{\cbstar} &= A_x(X-\bar{X}) + R_t - \bar{R}_t \\
\bar{\cbstar} &= \hat{A}_x\bar{X} + \bar{\hat{R}}_t 
\end{split}
\end{equation*}
where the coefficients are defined in \eqref{feedback_control_complete} and $X^\star$ is the state process optimally controlled. Since $R, \hat{R} \in  L^{2}_{\mathbb{F}}( \Omega \times \R_+, \R^{\sum_i d_i})$ and given that the coefficient are constant in the infinite horizon case, we need to verify that:
\begin{align*}
\int_0^\infty e^{-\rho u} \E[|X^\star_u - \E[X^\star_u]|^2] du < \infty &&  \int_0^\infty e^{-\rho u} |\E[X^\star_u]|^2 du < \infty. 
\end{align*}
As we will see below, we will have to choose $\rho$ large  enough to ensure these conditions. From the above expressions we see that $X^\star$ satisfies:
\begin{equation*}
\begin{cases}
dX^\star_t &= b_t^\star dt + \sigma^\star_t dW_t \\
X_0 &= x_0
\end{cases}
\end{equation*}
with:
\begin{align*}
& b^\star_t = \beta^\star_t + B^\star(X_t^\star - \E[X_t^\star]) + \hat{B}^\star \E[X^\star_t] && \sigma^\star_t = \gamma^\star_t + \Sigma^\star(X_t^\star - \E[X_t^\star]) + \hat{\Sigma}^\star \E[X^\star_t]
\end{align*}
where we define 
\begin{align*}
&B^\star = b_x + B A_x && \hat{B}^\star = \hat{b}_x + \hat{B} \hat{A}_x && \Sigma^\star = \sigma_x + \Sigma A_x && \hat{\Sigma}^\star = \hat{\sigma}_x + \hat{\Sigma} \hat{A}_x \\
&\beta^\star_t = \beta_t + B (R_t - \E[R_t]) + \hat{B} \bar{\hat{R}}_t \\
&\gamma^\star_t = \sigma_t + \Sigma (R_t - \E[R_t]) + \hat{\Sigma} \bar{\hat{R}}_t. 
\end{align*}
By It\^o's formula we have:
\begin{equation*}
\begin{split}
\frac{d}{dt} e^{-\rho t } |\bar{X}_t|^2  &\leq e^{-\rho t } \entrepar{ - \rho |\bar{X}^\star_t|^2 + 2 (\bar{b}^\star_t)^\intercal \bar{X}_t^\star } \\
&\leq e^{-\rho t } \entrepar{  |\bar{X}^\star_t|^2 (- \rho + 2 \hat{B}^\star) + 2|\bar{X}^\star_t| |\bar{\beta}^\star_t| } \\
&\leq e^{-\rho t } \entrepar{ |\bar{X}^\star_t|^2 (- \rho + 2 \hat{B}^\star + \epsilon) + \frac{1}{\epsilon}|\bar{\beta}^\star_t|^2}. 
\end{split}
\end{equation*}
If we now set:
\begin{align*}
K = |\E[X_0^\star]|^2 + \frac{1}{\epsilon} \int_0^\infty e^{-\rho u} |\bar{\beta}_t^\star|^2  du && C= - \rho + 2 \hat{B}^\star + \epsilon,
\end{align*}
then, by Grownall inequality we obtain:
\begin{equation*}
e^{-\rho t} |\E[X_t^\star]|^2 \leq K e^{C t}.
\end{equation*}
Therefore, in order to have $\int_0^\infty e^{-\rho u}|\E[X_u^\star]|^2 \dd u < \infty$,  we shall impose  that $\rho > 2 \hat{B}^\star$.  Finally, by It\^o's formula we also have:
\begin{equation}
\begin{split}
\frac{d}{dt} \E[e^{-\rho t}|X_t^\star - \bar{X}_t^\star|^2] 
&\leq e^{-\rho t } \EE{-\rho|X_t^\star - \bar{X_t}^\star|^2 + 2 (b_t^\star - \bar{b_t}^\star)^\intercal(X_t^\star - \bar{X_t}^\star) +   |\sigma_t^\star|^2    } \\
&\leq e^{-\rho t } \E\left[|X_t^\star - \bar{X_t}^\star|^2(-\rho + 2 B^\star + \epsilon) + \frac{1}{\epsilon}|\beta^\star_t - \bar{\beta_t}^\star|^2 \right.\\
&\left. + 4(|\gamma^\star_t|^2 + |\Sigma^\star|^2 |X_t^\star - \E[X_t^\star]|^2 + |\hat{\Sigma}^\star|^2 |\E[X^\star_t]|^2) \right] \\
&\leq e^{-\rho t } \E\left[|X_t^\star - \bar{X_t}^\star|^2(-\rho + 2 B^\star + \epsilon + 4|\Sigma^\star|^2 ) + \right. 
\\& \left.\frac{1}{\epsilon}|\beta^\star_t - \bar{\beta_t}^\star|^2 +4(|\gamma^\star_t|^2 + |\hat{\Sigma}^\star|^2 |\E[X^\star_t]|^2) \right]. 
\end{split}
\end{equation}
If we now set:
\begin{align*}
&K = |X_0^\star - \bar{X}_0^\star|^2  +  \int_0^\infty e^{-\rho u} \EE{\frac{1}{\epsilon}|\beta^\star_u - \bar{\beta_u}^\star|^2+ 4(|\gamma^\star_u|^2 + |\hat{\Sigma}^\star|^2 |\E[X^\star_u]|^2)}  du  \\
&C = -\rho + 2 B^\star  + 4|\Sigma^\star|^2 + \epsilon,
\end{align*}
then, by Grownall inequality we obtain:
\begin{equation*}
e^{-\rho t} \EE{|X_t^\star - \bar{X_t}^\star|^2} \leq K e^{-C t}. 
\end{equation*}
This time in order to ensure the convergence $\int_0^\infty e^{-\rho u} \EE{|X_u^\star - \bar{X_u}^\star|^2} \dd u < \infty$,  we will add the constraint $\rho > 2 B^\star  + 4|\Sigma^\star|^2$. 
To conclude, in order to ensure that $\cbstar \in \cspace$ we make the following assumption:

\vspace{2mm}

\textbf{(H5')} $\rho > \max \entrecro{2 \hat{B}^\star,2 B^\star  + 4|\Sigma^\star|^2}$.

\end{enumerate}
%%%%%%%%% END: INFINITE HORIZON %%%%%%%%% 

\subsection{The case of common noise}

Let $W$ and $W^0$ be two independent Brownian motions defined on the same probability space $(\Omega, \mathcal{F},\mathbb{F} ,\mathbb{P})$ where $\mathbb{F} = \{\cali{F}_t\}_{t\in [0,T]}$ is the filtration generated by the pair $(W,W^0)$. Let $\mathbb{F}^0 = \{\cali{F}_t^0\}_{t \in [0,T]}$ be the filtration generated by $W^0$. For any $X_0$ and $\cbold \in \cspace$ as in Section 1, the controlled process $X^{\cbold}$ is defined by:
\bec{
d\X = b \argscond dt &+ \sigma \argscond dW_t  \\
&+ \sigma^0 \argscond dW_t^0 \\
X_0^{\c} = X_0
}
where for each $t \in [0,T]$, $x, \bar{x} \in \R^d$, $a_i, \bar{a}_i \in \R^{d_i}$:
\bec{
b(t,x,\bar{x}, \cbold, \bar{\cbold}) &= \beta_t + \bx x + \tilde{b}_{x,t}\bar{x} + \sum_{i=1}^n b_{i,t} \c_{i} + \tilde{b}_{i,t} \bar{\c}_{i}  \\
&= \beta_t + \bx x + \tilde{b}_{x,t}\bar{x} + B_t \cbold + \tilde{B}_t \bar{\cbold} \\
\sigma (t,x,\bar{x}, \cbold, \bar{\cbold})  &= \gamma_t + \sx x + \tilde{\sigma}_{x,t}\bar{x} + \sum_{i=1}^n \sigma_{i,t} \c_{i} + \tilde{\sigma}_{i,t} \bar{\c}_{i} \\
&= \gamma_t + \sx x + \tilde{\sigma}_{x,t}\bar{x} + \Sigma_t \cbold + \tilde{\Sigma}_t \bar{\cbold} \\
\sigma (t,x,\bar{x}, \cbold, \bar{\cbold})  &= \gamma_t^0 + \sx^0 x + \tilde{\sigma}_{x,t}^0\bar{x} + \sum_{i=1}^n \sigma_{i,t}^0 \c_{i} + \tilde{\sigma}^0_{i,t} \bar{\c}_{i}  \\
&= \gamma_t^0 + \sx^0 x + \tilde{\sigma}^0_{x,t}\bar{x} + \Sigma_t^0 \cbold + \tilde{\Sigma}_t^0 \bar{\cbold}. 
}
Since we will condition on $W^0$, we assume that  the coefficients $b_x, \tilde{b}_x, b_i, \tilde{b}_i, \sigma_x, \tilde{\sigma}_x, \sigma_i, \tilde{\sigma}_i, \sigma_x^0, \tilde{\sigma}_x^0, \sigma_i^0, \tilde{\sigma}_i^0$ are essentially bounded and $\mathbb{F}^0$-adapted processes, whereas $\beta, \gamma, \gamma^0$ are square integrable $\mathbb{F}^0$-adapted processes. The problem of each player $i$ 
is to minimize over $\alpha_i$ $\in$ ${\cal A}^i$, and given control $\alpha^{-i}$ of the other players,  a cost functional  of the form
\begin{equation}
\begin{split}
J\i(\alpha_i,\alpha^{-i})  = \E \left[ \int_0^T e^{-\rho t} f\i (X_t^{\cbold}, \E[X_t^{\cbold}|W_t^0], \cbold_t, \E[\cbold_t|W_t^0]) dt  + e^{-\rho T} g\i(X_T^{\c},\E[X_T^{\c}|W_T^0] )\right]
\end{split}
\end{equation}
with $f\i, g\i$ as in \eqref{coefJ}. We now suppose that $Q\i, \tilde{Q}\i,I\i, \tilde{I}\i, N\i, \tilde{N}\i $ are essentially bounded and $\mathbb{F}^0$-adapted, $L_x\i, L\i_k$ are square-integrable $\mathbf{F}$-adapted processes, $P\i, \tilde{P}\i$ are essentially bounded $\cali{F}_T^0$-mesurable random variable and $r\i$ are square-integrable $\cali{F}_T$-mesurable random variable. Hypothesis \ref{H2_c} and \ref{H2_d} of \textbf{(H2)} still holds. As in \textbf{step 1} we guess a random field of the type $\cali{W}^{\cbold,i}_t = w_t\i(X_t^{\cbold}, \EE{X_t^{\cbold}|W_t^0})$ where $w\i$ is of the form  $w_t\i(x,\bar{x}) =  \scal{K_t\i}{(x-\bar{x})} + \scal{\Lambda_t\i }{\bar{x}} +2 Y_t^{i \intercal}x + R\i_t$ with suitable coefficients $K\i, \Lambda\i, Y\i, R\i$. Given that the quadratic coefficient in $f\i, g\i$ are $\mathbb{F}^0$-adapted we guess that $K\i, \Lambda\i$ are also $\mathbb{F}^0$-adapted. Since the linear coefficients in $f\i, g\i$ and the affine coefficients in $b, \sigma, \sigma^0$ are $\mathbb{F}$-adapted, we guess that $Y\i$ is $\mathbb{F}$-adapted as well. Thus for each player we look for processes $(K\i, \Lambda\i, Y\i, R\i)$ valued in $\mathbb{S}^d_+ \times \mathbb{S}^d_+ \times \R^d\times\R$ and of the form:
\begin{equation}
\begin{cases}
dK\i_t = \dot{K}_t\i dt + Z^{K\i}_t dW_t^0 &K\i_T = P\i \\
d\Lambda\i_t = \dot{\Lambda}_t\i dt   + Z^{\Lambda\i}_t dW_t^0   &\Lambda\i_T  = P\i + \tilde{P}\i \\
dY_t\i = \dot{Y}_t\i dt + Z_t^{i} dW_t + Z^{0,Y\i}_t dW_t^0    &Y_T\i = r\i \\
dR_t\i = \dot{R}_t\i dt &R\i_T = 0
\end{cases}
\end{equation}
where $\dot{K}\i, \dot{\Lambda}\i, Z^{K\i},Z^{\Lambda\i}$ are $\mathbb{F}^0$-adapted processes valued in $\mathbb{S}^d$; $\dot{Y}\i, Z^{Y\i}, Z^{0,Y\i}$ are $\mathbb{F}$-adapted processes valued in $\R^d$ and $R\i$ are continuous functions valued in $\R$. In \textbf{step 2} we now consider, for each player $i \in \inter$, a family of processes of the form:
\begin{equation}
    \cali{S}^{\cbold,i}_t =  e^{-\rho t } w_t\i(X_t^{\cbold}, \EE{X_t^{\cbold}|W_t^0}) + \int_0^t e^{-\rho u } f\i(u,X_u^{\cbold}, \EE{X_u^{\cbold}|W_u^0}, \cbold_u, \EE{\cbold_u|W_u^0}) du.   
\end{equation}
By It\^o's formula we then obtain for \eqref{coeffX_a} and \eqref{coeffA_a}:
\begin{equation}
\begin{cases}
\Phi \i_t &= Q\i _t + \sx^\intercal K\i_t \sx + \sigma^{0\intercal}_t K\i_t \sigma^{0}_t + K\i_t \bx + \bx^\intercal K\i_t + Z^{K\i}_t \sigma_{x,t}^0 + \sigma_{x,t}^{0\intercal} Z^{K\i}_t - \rho K\i_t   \\
\Psi \i_t &= \hat{Q}\i _t + \hat{\sigma}_{x,t}^\intercal K\i_t \hat{\sigma}_{x,t} +\hat{\sigma}_{x,t}^{0\intercal} \Lambda\i_t \hat{\sigma}_{x,t}^0 + \Lambda\i_t \hat{b}_{x,t} + \hat{b}_{x,t}^\intercal \Lambda\i_t + Z^{\Lambda\i}_t \Hat{\sigma}_{x,t}^0 + \Hat{\sigma}_{x,t}^{0\intercal} Z^{\Lambda\i}_t - \rho \Lambda\i_t \\   
\Delta \i_t & = L\i_{x,t} + \bx^\intercal Y\i_t + \bxt^\intercal \bar{Y}\i_t + \sx^\intercal Z\i_t + \sxt^\intercal \bar{Z}\i_t + \sigma_{x,t}^{0\intercal} Z^{0,Y\i}_t + \tilde{\sigma}_{x,t}^{0\intercal} \bar{Z^{0,Y\i}}_t + \Lambda\i_t \bar{\beta}_t \\
&+ \sx^\intercal K\i_t \gamma_t + \sxt^\intercal K\i_t \bar{\gamma}_t + K\i_t(\beta_t - \bar{\beta}_t) + Z^{K\i}_t(\gamma^0_t - \bar{\gamma}^0_t) + Z^{\Lambda\i}_t \bar{\gamma}^0_t
- \rho Y\i_t\\
&+ \sum_{k \neq i} U^{i\intercal}_{k,t} (\c_{k,t} - \bar{\c}_{k,t}) + V^{i\intercal}_{k,t} \bar{\c}_{k,t} \\
\Gamma\i_t &= \gamma_t^\intercal K\i_t \gamma_t + \bar{\gamma}_t^{0\intercal} \Lambda\i_t \bar{\gamma}_t^0 +(\gamma_t^0 - \bar{\gamma}^0_t)^\intercal K\i_t (\gamma_t^0 - \bar{\gamma}^0_t) +  2 \beta_t^\intercal Y\i_t + 2 \gamma_t^\intercal Z\i_t +  2 \gamma_t^{0\intercal} Z^{0,Y\i}_t \\
&+ \sum_{k \neq i} (\c[_{k,t}] - \bar{\c}_{k,t})^\intercal S\i_{k,t} (\c[_{k,t}] - \bar{\c}_{k,t}) + \bar{\c}_{k,t}^\intercal \hat{S}\i_{k,t} \bar{\c}_{k,t} +2 [O^{i}_{k,t} + \xi^{i}_{k,t} - \bar{\xi}^{i}_{k,t}]^\intercal \c[_{k,t}] - \rho R\i_t,
\label{coeffX_a_CM}
\end{cases}
\end{equation}
with 
\begin{equation}
\begin{cases}
S\i_{k,t} &= N\i_{k,t} + \sigma_{k,t}^\intercal K\i_t \sigma_{k,t} + (\sigma_{k,t}^0)^\intercal K\i_t \sigma_{k,t}^0  \\
\hat{S}\i_{k,t} &= \hat{N}\i_{k,t} + \hat{\sigma}_{k,t}^\intercal K\i_t  \hat{\sigma}_{k,t} + (\hat{\sigma}_{k,t}^0)^\intercal \Lambda\i_t  \hat{\sigma}_{k,t}^0 \\
U^{i}_{k,t} &= I_{k,t}\i + \sigma_{k,t}^\intercal K\i_t \sigma_{x,t} + (\sigma_{k,t}^0)^\intercal K\i_t \sigma_{x,t}^0 + (\sigma_{k,t}^0)^\intercal Z^{K\i}_t + b_{k,t}^\intercal K\i_t \\
V^{i}_{k,t} &= \hat{I}_{k,t}\i + \hat{\sigma}_{k,t}^\intercal  K\i_t \hat{\sigma}_{x,t} + (\Hat{\sigma}_{k,t}^0)^\intercal \Lambda\i_t \Hat{\sigma}_{x,t}^0 +(\Hat{\sigma}_{k,t}^0)^\intercal Z^{\Lambda\i}_t + \hat{b}_{k,t}^\intercal \Lambda \i_t \\
O^{i}_{k,t} &= \bar{L}\i_{k,t} + \hat{b}_{k,t}^\intercal \bar{Y}\i_t + \hat{\sigma}_{k,t}^\intercal \bar{Z}\i_t + \hat{\sigma}_{k,t}^\intercal K\i_t \bar{\gamma}_t + (\hat{\sigma}_{k,t}^0)^\intercal \Lambda\i_t \bar{\gamma}_t^0 + (\hat{\sigma}_{k,t}^0)^\intercal \bar{Z^{0,Y\i}_t} \\ 
&+ \frac{1}{2} \sum_{k \neq i} (\hat{J}\i_{i,k,t} + \hat{J}_{k,i,t}^{i\intercal})\bar{\c}_{k,t} \\
J\i_{k,l,t} &= G\i_{k,l,t} +  \sigma_{k,t}^\intercal K\i_t \sigma_{l,t} + (\sigma_{k,t}^0)^\intercal K\i_t \sigma_{l,t}^0  \\
\hat{J}\i_{k,l,t} &= \hat{G}\i_{k,l,t} +  \hat{\sigma}_{k,t}^\intercal K\i_t \hat{\sigma}_{l,t} + (\hat{\sigma}_{k,t}^0)^\intercal \Lambda\i_t  \hat{\sigma}_{l,t}^0  \\
\xi^{i}_{k,t} &= L\i_{k,t} + b_{k,t}^\intercal Y\i_t + \sigma_{k,t}^\intercal Z\i_t + (\sigma_{k,t}^0)^\intercal Z^{0,Y\i}_t + \sigma_{k,t}^\intercal K\i_t \gamma_t + (\sigma_{k,t}^0)^\intercal K\i_t \gamma_t^0 \\
&+ \frac{1}{2} \sum_{k \neq i} (J\i_{i,k,t} + J_{k,i,t}^{i\intercal})\c_{k,t}. 
\label{coeffA_a_CM}
\end{cases}
\end{equation}
Note that we now denote by $\bar{U}$ the conditional expectation with respect to $W_t^0$, i.e. $\bar{U} = \EE{U|W_t^0}$. Then, at \textbf{step 3}, we constraint the coefficients $(K\i, \Lambda\i, Y\i, R\i)$ to satisfy the following problem:
\begin{equation}
\begin{cases}
dK\i_t = -\Phi^{i0}_t dt + Z^{K\i}_t dW_t^0 &K\i_T = P\i \\
d\Lambda\i_t = -\Psi^{i0}_t dt + Z^{\Lambda\i}_t dW_t^0    &\Lambda\i_T = P\i + \tilde{P}\i \\
dY_t\i = -\Delta^{i0}_t dt + + Z_t^{i} dW_t + Z^{0,Y\i}_t dW_t^0 &Y_T\i = r\i \\
dR_t\i = -\Gamma_t^{i0} dt &R\i_T = 0
\end{cases}
\label{sys_coeff_optimControl_CM}
\end{equation}
where $\Phi^{i0}, \Psi^{i0}, \Delta^{i0}, \Gamma^{i0}$ are defined in \eqref{sys_coeff_optimControl}. Thus we obtain the best response functions of the players:
\begin{equation}
\begin{split}
\c_{i,t} 
&=- {S}^{i -1}_{i,t} {U}^{i }_{i,t} (X_t -\E[X_t |W_t^0]) - {S}^{i -1}_{i,t} ({\xi}\i_{i,t} - {\bar{\xi}}\i_{i,t}) -\hat{S}_{i,t}^{i -1}(V^{i}_{i,t} \E[X_t|W_t^0] + O^{i}_{i,t}). 
\end{split}
\label{best_response_CM}
\end{equation}
We then proceed to \textbf{step 5} and to the search of a fixed point in the space of controls. The only difference at that point is in the ansatz for $t \mapsto \boldmath{Y_t}$. Since we consider the case of common noise,  we now search for an ansatz of the form $\boldsymbol{Y}_t = \pi_t (X_t - \bar{X}_t) + \hat{\pi}_t \bar{X}_t + \eta_t $
where $(\pi, \hat{\pi}, \eta) \in L^{\infty}([0,T], \R^{nd \times d}) \times L^{\infty}([0,T], \R^{nd \times d}) \times \cali{S}_\F^2(\Omega\times [0,T], \R^{nd})$ satisfy:
\begin{equation}
\begin{cases}
d\eta_t &= \psi_t dt + \phi_t dW_t + \phi_t^0 dW_t^0 \\
\eta_T &= \boldsymbol{r} = (r^i)_{i\in \inter} \\
d\pi_t &= \dot{\pi}_t dt + Z_t^{0,\pi\i} dW_t^0\\
\pi_T &= 0 \\
d\hat{\pi}_t &= \dot{\hat{\pi}}_t dt + Z_t^{0,\Hat{\pi}\i} dW_t^0\\
\hat{\pi}_T &=0. 
\end{cases}
\label{sys_fixed_point_CM}
\end{equation}
The method to determine the coefficients is then similar. Existence and uniqueness of a solution $(K\i, \Lambda\i)$ to the backward stochastic Ricatti equation in \eqref{sys_coeff_optimControl_CM} is discussed in \cite{Pham2016}, section 3.2. The existence of a solution $(Y\i, Z\i, Z^{0,Y\i})$ to the linear mean-field BSDE in \eqref{sys_coeff_optimControl_CM} is obtained as in \textbf{step 6} thanks to Thm. 2.1 in \cite{Li2017}. As in the previous section the existence of a soltion  $(\pi, \hat{\pi}) \in L^{\infty}_{\mathbb{F}^0}( \Omega \times \R_+, \R^{nd\times d})$ (essentially bounded $\mathbb{F}^0$-adapted functions) of \eqref{sys_fixed_point_CM} in the general case is a conjecture and needs to be verified in each example. We are not aware of any work tackling the existence of solutions in such situation. Given $(\pi, \hat{\pi})$ the existence of $(\eta, \phi, \phi^0)$ solution to \eqref{sys_fixed_point_CM} is ensured as in the previous section by Thm. 2.1 in \cite{Li2017}.

\subsection{The case of multiple Brownian motions}

We quickly sketch an extension to the case where there are multiple Brownian motions driving the state equation. The assumptions on the coefficients are the same as in the previous part. Only the length of the calculus changes. Let us now consider the state dynamic:
\begin{equation*}
d\X = b \args dt + \sum_{\ell=1}^\kappa \sigma^\ell \args dW_t^\ell
\end{equation*}
where $\Phi^{i0}, \Psi^{i0}, \Delta^{i0}, \Gamma_t^{i0}$ are defined in \eqref{sys_coeff_optimControl}.
\begin{equation}
\begin{cases}
\cbold_t &= (\c_{1,t},..., \c_{n,t}) \\ 
b(t,x,\bar{x}, \boldsymbol{a}, \boldsymbol{\bar{a}}) &= \beta_t + \bx x + \tilde{b}_{x,t} \bar{x} + \sum_{i=1}^n b_{i,t} a_{i,t} + \tilde{b}_{i,t} \bar{a}_{i,t}  \\
\sigma^\ell (t,x,\bar{x}, \boldsymbol{a}, \boldsymbol{\bar{a}}) &= \gamma_t^\ell + \sx^\ell x + \tilde{\sigma}_{x,t}^\ell\bar{x} + \sum_{i=1}^n \sigma_{i,t}^\ell \c_{i,t} + \tilde{\sigma}_{i,t}^\ell \bar{\c}_{i,t}.  
\end{cases}
\label{coef_MMB}
\end{equation}
We require the coefficients in \eqref{coef_MMB} to satisfy an adaptation of \textbf{(H1)} where $\gamma, \sigma_x, \tilde{\sigma}_x, (\sigma_i, \tilde{\sigma}_i)_{i \in \inter}$ are replaced by 
$\gamma^\ell, \sigma_x^\ell, \tilde{\sigma}_x^\ell, (\sigma_i^\ell, \tilde{\sigma}_i^\ell)_{i \in \inter}$ for $\ell \in \{1,...,\kappa\}$.

To take into account the multiple Brownian motions in \textbf{step 1}, we now search for random fields of the form $w_t\i(x,\bar{x}) = \scal{K_t\i}{(x-\bar{x})} + \scal{\Lambda_t\i }{\bar{x}} +2 Y_t^{i\intercal}x + R_t\i$ with the processes $(K\i,\Lambda\i, Y\i, (Z^{\ell,i})_{\ell\in \llbracket 1,\kappa \rrbracket}, R\i)$ in 
$(L^{\infty}([0,T], \mathbb{S}^d_+))^2\times,\cali{S}^2_{\mathbb{F}}(\Omega\times [0,T],\R^{d}) \times (L^2_{\mathbb{F}}(\Omega\times [0,T],\R^{d}))^\kappa 
\times L^{\infty}([0,T], \R)$ and solution to:
\begin{equation}
\begin{cases}
dK\i_t = \dot{K}_t\i dt &K\i_T = P\i \\
d\Lambda\i_t = \dot{\Lambda}_t\i dt &\Lambda\i_T = P\i + \tilde{P}\i \\
dY_t\i = \dot{Y}_t\i dt + \sum_\ell Z_t^{i,\ell} dW_t^\ell &Y_T\i = r\i \\
dR_t\i = \dot{R}_t\i dt &R\i_T = 0
\end{cases}
\end{equation}
where $(\dot{K}\i, \dot{\Lambda}\i, \dot{R}\i)$ are deterministic processes valued in $\mathbb{S}^d_+ \times \mathbb{S}^d_+ \times \R$ and $(\dot{Y}\i, Z\i)$ are adapted processes valued in $\R^d$.

The method then follows the same steps with generalized coefficients and at \textbf{step 2} we obtain generalized coefficient for \eqref{coeffX_a} and \eqref{coeffA_a}:
\begin{equation}
\begin{cases}
\Phi \i_t &= Q\i _t  + K\i_t \bx + \bx^\intercal K\i_t  + \sum_{r} \sx^{r\intercal} K\i_t \sx^r - \rho K^{i}_t\\
\Psi \i_t &= \hat{Q}\i _t  + \Lambda\i_t \hat{b}_{x,t} + \hat{b}_{x,t}^\intercal \Lambda\i_t + \sum_{r} \hat{\sigma}_{x,t}^{r\intercal} \Lambda\i_t \hat{\sigma}_{x,t}^r - \rho \Lambda_t^{i} \\
\Delta \i_t & = L\i_{x,t} + \bx^\intercal Y\i_t + \bxt^\intercal \bar{Y}\i_t + \Lambda\i_t \bar{\beta}_t + K\i_t(\beta_t - \bar{\beta}_t) \\
&  + \sum_\ell \sx^{\ell\intercal} K\i_t \gamma^\ell_t + \sxt^{\ell\intercal} K\i_t \bar{\gamma}^\ell_t + \sx^{\ell\intercal} Z^{i,\ell}_t + \sxt^{\ell\intercal} \bar{Z}^{i,\ell}_t 
- \rho Y^{i}_t \\
&+ \sum_{k \neq i} U\i_{k,t} (\c_{k,t} - \bar{\c}_{k,t}) + V\i_{k,t} \bar{\c}_{k,t} \\
\Gamma\i_t &= 2 \beta_t^\intercal Y\i_t + \sum_\ell \gamma_t^{\ell\intercal} K\i_t \gamma_t^\ell +  2 \gamma_t^{\ell\intercal} Z^{i,\ell}_t\\
&+ \sum_{k \neq i} (\c[_{k,t}] - \bar{\c}_{k,t})^\intercal S\i_{k,t} (\c[_{k,t}] - \bar{\c}_{k,t}) + \bar{\c}_{k,t}^\intercal \hat{S}\i_{k,t} \bar{\c}_{k,t} +2 [O^{i}_{k,t} + \xi^{i}_{k,t} - \bar{\xi}^{i}_{k,t}]^\intercal \c[_{k,t}] 
\end{cases}
\label{coeffX_extended}
\end{equation}

\begin{equation}
\begin{cases}
S\i_{k,t} &= N\i_{k,t} + \sum_\ell \sigma_{k,t}^{\ell\intercal} K\i_t \sigma_{k,t}^\ell  \\
\hat{S}\i_{k,t} &= \hat{N}\i_{k,t} + \sum_\ell  \hat{\sigma}_{k,t}^{\ell\intercal} K\i_t  \hat{\sigma}_{k,t}^\ell \\
U^{i}_{k,t} &= I_{k,t}\i + b_{k,t}^\intercal K\i_t + \sum_\ell \sigma_{k,t}^{\ell\intercal} K\i_t \sigma_{x,t}^\ell  \\
V^{i}_{k,t} &= \hat{I}_{k,t}\i + \hat{b}_{k,t}^\intercal \Lambda \i_t  + \sum_\ell \hat{\sigma}_{k,t}^{\ell\intercal}  K\i_t \hat{\sigma}_{x,t}^\ell \\
O^{i}_{k,t} &= \bar{L}\i_{k,t} + \hat{b}_{k,t}^\intercal \bar{Y}\i_t + \sum_\ell \hat{\sigma}_{k,t}^{\ell\intercal} \bar{Z}^{i,\ell}_t + \hat{\sigma}_{k,t}^{\ell\intercal} K\i_t \bar{\gamma}_t^\ell \\
J\i_{k,l,t} &= G\i_{k,l,t} + \sum_\ell  \sigma_{k,t}^{\ell\intercal} K\i_t \sigma_{l,t}^\ell \\
\hat{J}\i_{k,l,t} &= \hat{G}\i_{k,l,t} + \sum_\ell  \hat{\sigma}_{k,t}^{\ell\intercal} K\i_t \hat{\sigma}_{l,t}^\ell \\
\xi^{i}_{k,t} &= L\i_{k,t} + b_{k,t}^\intercal Y\i_t + \sum_\ell \sigma_{k,t}^{\ell\intercal} Z^{i,\ell}_t + \sigma_{k,t}^{\ell\intercal} K\i_t \gamma_t^\ell. 
\end{cases}
\label{coeffA_extended}
\end{equation}
From these extended formulas we can then constrain the coefficients as in \textbf{step 3} and obtain \eqref{sys_coeff_optimControl} with now the generalized coefficients defined in \eqref{coeffX_extended} and \eqref{coeffA_extended}. The \textbf{step 4} is then straightforward and we obtain the best response functions:
\begin{equation}
\begin{split}
\c_{i,t} &= a_t^{i,0}(X_t, \E[X_t]) + a_t^{i,1}(\E[X_t]) \\
&=- ({S}^{i }_{i,t})^{-1} {U}^{i }_{i,t} (X_t-\E[X_t]) - ({S}^{i }_{i,t})^{-1} ({\xi}\i_{i,t} - {\bar{\xi}}\i_{i,t}) - (\hat{S}_{i,t}^{i })^{-1} (V^{i}_{i,t} \E[X_t] + O^{i}_{i,t}). 
\end{split}
\label{best_response_MBM}
\end{equation}
From \textbf{step 4} we can then continue to \textbf{step 5}, i.e. the fixed point search. The only difference at that point is in the ansatz for$t \mapsto \boldmath{Y_t}$. Since we consider the case with multiple Brownian motions we now search for an ansatz of the form $\boldsymbol{Y}_t = \pi_t (X_t - \bar{X}_t) + \hat{\pi}_t \bar{X}_t + \eta_t $ 
where $(\pi, \hat{\pi}, \eta) \in L^{\infty}([0,T], \R^{nd \times d}) \times L^{\infty}([0,T], \R^{nd \times d}) \times \cali{S}_\F^2(\Omega\times [0,T], \R^{nd})$ satisfy:
\begin{equation}
\begin{cases}
d\eta_t &= \psi_t dt + \sum_\ell \phi_t^\ell dW_t^\ell  \\
\eta_T &= \boldsymbol{r} = (r^i)_{i\in \inter} \\
d\pi_t &= \dot{\pi}_t dt \\
\pi_T &= 0 \\
d\hat{\pi}_t &= \dot{\hat{\pi}}_t dt \\
\hat{\pi}_T &=0. 
\end{cases}
\end{equation}
The method to determine the coefficients $\pi, \hat{\pi}, \eta$ is then similar. The validity of the computations i.e. \textbf{Step 6} can be done exactly as in the case of a single brownian motion.

%%%%%%%%% EXAMPLE %%%%%%%%% 

\section{Example} \label{secex}

We now focus on a toy example to illustrate the previous results. Let us  consider a two player game where the state dynamics is simply a Brownian motion that two players can control. 
The goal of each player is to get the state near its own target $t \mapsto T\i_t$, where $t \mapsto T\i_t$, $i$ $=$ $1,2$,  is a stochastic process. In order to add mean-field terms we suppose that each player try also to minimize the variance of the state and the variance of their controls. 
\bec{
dX_t &= ( b_1\c_{1,t} +b_2\c_{2,t} )dt + \sigma dW_t \\
J\i(\c_1, \c_2) &= \EE{\int_0^\infty e^{-\rho u}\Big( \lambda\i \var(X_u)+\delta\i (X_u-T\i_u)^2 + \theta\i \var(\c_{i,u}) + \xi\i \c_{i,u}^2 \Big) du   }, \; i=1,2, 
\label{example_1}
}
where $(\lambda\i, \delta\i, \theta\i, \xi\i) \in \R^4_+$. In order to fit to the context described in the first section we rewrite the cost function as follows:
\begin{equation}
\begin{split}
J\i(\c_1, \c_2) = \E \Big[  \int_0^\infty &e^{-\rho u} \Big( (\lambda\i + \delta\i) (X_u - \bar{X}_u)^2 + \delta\i \bar{X}_u^2 + (\theta\i + \xi)^2 (\c_{i,u} - \bar{\c}_{i,u}) \\
&  + \xi\i \bar{\c}_{i,u}^2 + 2X_u[-2\delta\i T\i] + \delta\i (T\i_u)^2 \Big) du \Big].  
\end{split}
\end{equation}
Since the terms $\delta\i (T\i)^2$ do not  influence the optimal control of the players,  we work with the slightly simplified cost function: 
\begin{equation*}
\tilde{J}\i(\c_1, \c_2) = \EE{\int_0^\infty e^{-\rho u}\Big( (\lambda\i + \delta\i) (X_u - \bar{X}_u)^2 + \delta\i \bar{X}_u^2 + (\theta\i + \xi)^2 (\c_{i,u} - \bar{\c}_{i,u}) + \xi\i \bar{\c}_{i,u}^2 + 2X_u[-2\delta\i T\i] 
\Big) du  }. 
\end{equation*}

Following the method explained in the previous section,  we use Theorem \ref{theorem_nash_eq} in order to find a Nash equilibrium. We obtain the feedback form of the open loop controls and the dynamics of the state:
\bec{
\c\i - \bar{\c}\i &= - \frac{P\i}{b_i} \entrepar{ (K\i + \pi\i) (X_t-\bar{X}_t) + \eta\i_t -\bar{\eta}\i_t } \\
\bar{\c}\i &= - \frac{\tilde{P}\i}{b_i} \entrepar{ (\Lambda\i + \tilde{\pi}\i) \bar{X}_t + \bar{\eta}\i_t } \\
\bar{X}_t &= \bar{X}_0 e^{-\tilde{a}t} + \int_0^t e^{-\tilde{a}(t-u)} \bar{\gamma}_u  du \\
X_t - \bar{X}_t &= (X_0 - \bar{X}_0)e^{-at} + \int_0^t e^{-a(t-u)} \entrecro{(\gamma_u - \bar{\gamma}_u)\dd u + \sigma dW_u} 
\label{state_control_exampel}
}
where $K\i \in \R_+, \Lambda\i \in \R_+, a \in \R, \tilde{a} \in \R, \pi \in \R^2, \tilde{\pi} \in \R^2, \eta \in L^2_{\mathbb{F}}((0,\infty), \R^2), \gamma \in L^2_{\mathbb{F}}((0,\infty), \R^2) $ satisfy:
\begin{align}
  \begin{cases}
      K\i &= \frac{- \rho + \sqrt{\rho^2 + 4 P\i(\lambda\i + \delta\i)} }{2P\i} \\
      \Lambda\i &= \frac{- \rho + \sqrt{\rho^2 + 4 \tilde{P}\i\delta\i} }{2\tilde{P}\i} \\
      a &= \sum_{i=1}^2 P\i(K\i + \pi\i) \\
\tilde{a} &= \sum_{i=1}^2 \tilde{P}\i(\Lambda\i + \tilde{\pi}\i) \\
0 &= P_y \pi - (\pi B - P_{\c})(S_x+ S_y\pi) \\
0 &= \tilde{P}_y \tilde{\pi} - (\tilde{\pi} B - \tilde{P}_{\c})(\tilde{S}_x+ \tilde{S}_y\tilde{\pi}) \\
\eta_t - \bar{\eta}_t &= - \int_t^\infty e^{[P_y - (\pi B - P_{\c})S_y](t-u)} \EE{H_u - \bar{H}_u | \mathcal{F}_t} \dd u \\
\bar{\eta}_t &= - \int_t^\infty e^{[\tilde{P}_y - (\tilde{\pi} B - \tilde{P}_{\c})\tilde{S}_y](t-u)} \bar{H}_u  \dd u \\
\gamma_t - \bar{\gamma}_t &= - \sum_{i=1}^2 P\i (\eta_{i,t} - \bar{\eta}_{i,t}) \\
\bar{\gamma}_t &= - \sum_{i=1}^2 \tilde{P}\i \bar{\eta}_{i,t} \\
  \end{cases}
  &&
  \begin{cases}
   S_x &= -( P^1 K^1/b_1, P^2 K^2/b_2) \\
\tilde{S}_x &= -( \tilde{P}^1 \Lambda^1/b_1, \tilde{P}^2 \Lambda^2/b_2)\\
S_y &= -(\1_{i=j} P\i/b_i)_{i,j \in \{1,2\} }\\
\tilde{S}_y &= -(\1_{i=j} \tilde{P}\i/b_i)_{i,j \in \{1,2\} }\\
P_{\c} &= - (\1_{i \neq j} K\i b_j)_{i,j \in \{1,2\} }\\
\tilde{P}_{\c}&= - (\1_{i \neq j} \Lambda\i b_j)_{i,j \in \{1,2\} }\\
P_y &=  (\1_{i=j} (P\i K\i + \rho))_{i,j \in \{1,2\} }\\
\tilde{P}_y &=  (\1_{i=j} (\tilde{P}\i \Lambda\i + \rho))_{i,j \in \{1,2\} }\\
H &= (\delta^1 T^1, \delta^2 T^2) \\
B &= (b_1, b_2) \\
P\i &= \frac{b_i^2}{\theta_i + \xi^i} \\
\tilde{P}\i &= \frac{b_i^2}{  \xi\i }.   
  \end{cases}
\label{coeff_example}
\end{align}

% Suppose that $(T^1_t, T^2_t) \xrightarrow[t \to \infty]{} (T^1_{\infty}, T^2_{\infty})$ almost surely, where $(T^1_{\infty}, T^2_{\infty})$ is a deterministic vector of $\R^2$. From \eqref{state_control_exampel} and \eqref{coeff_example} we then have: 
% \begin{equation}
% \lim_{t\to \infty} \EE{X_t} = \frac{T^1_{\infty} (P_1 \delta^1) + T^2_{\infty}(P_2 \delta^2)}{\tilde{a}(\tilde{a} + \rho)}
% \end{equation}

% Suppose moreover that $\rho \to \infty$, then $\rho \Lambda\i \xrightarrow[\rho \to \infty]{} \delta \i$ and $\rho \pi\i \xrightarrow[\rho \to \infty]{} 0$. This implies that $\tilde{a} \xrightarrow[\rho \to \infty]{} \sum_{i=1}^2 \tilde{P}\i \delta\i $ and thus we obtain:
% \begin{equation}
% \lim_{\rho \to \infty} \lim_{t\to \infty} \EE{X_t} = \frac{T^1_{\infty} (P^1 \delta^1) + T^2_{\infty}(P^2 \delta^2)}{P^1 \delta^1 + P^2 \delta^2}
% \end{equation}

From \eqref{state_control_exampel} and \eqref{coeff_example} we can study and simulate the influence of the different parameters of the cost-function of the first player. We notice that $(\lambda^1, \theta^1)$ only influence $X - \EE{X}$ and the feedback form of $\c_1 - \bar{\c}_1$ (zero-mean terms).  
\begin{itemize}
\item If $\lambda^1 \to \infty$ then $\pi_1 \sim \lambda^1$ and $K^1 \to \infty$ which implies that $X_t - \EE{X}_t \to 0$ for all $t \geq 0$. This is expected since the term $\lambda^1$ penalizes the variance of the state $\var{(X_t)}$ in the cost function of the first player. See  Figure~\ref{fig_test_variance}.

\item If $\delta^1 \to \infty$ then $(\pi, \tilde{\pi}) \to \infty$ and $K^1 \sim \delta^1$ and $\Lambda^1 \sim \delta^1$ which imply that  $X_t  \to T^1_t$ for all $t \geq 0$. This is also expected since the term $\delta^1$ penalizes the quadratic gap between the state $X$ and the target $T^1$. See  Figure~\ref{fig_mean}.

\item If $\theta^1 \to \infty$ then $P^1 \to 0$, $P^1 K^1 \to 0$, $P^1 \pi^1 \to 0$ and  $P^1 \eta^1_t \to 0$ for every $t \geq 0$. We then have $\c_t - \bar{\c}_t \to 0$ for all $t \geq 0$ and all the terms relative to the first player in $X-\EE{X}$ disappear. Given that $\theta^1$ penalizes the variance of the control of the first player, this convergence is also intuitive.

\item If $\xi^1 \to \infty$ then $(P^1,\tilde{P}^1) \to 0$ which imply that $(\c_{1,t}, \bar{\c}_{1,t}) \to 0$ for all $t \geq 0$ and all the terms relative to the first player in $X_t-\EE{X_t}$ and $\EE{X_t}$ disappear for all $t \geq 0$. This means that the first player becomes powerless.
\end{itemize}

\begin{figure}
\centering
\subfloat[$\lambda\i=0$]{
    \label{fig_11}
    \includegraphics[width=85mm,height=4.2cm]{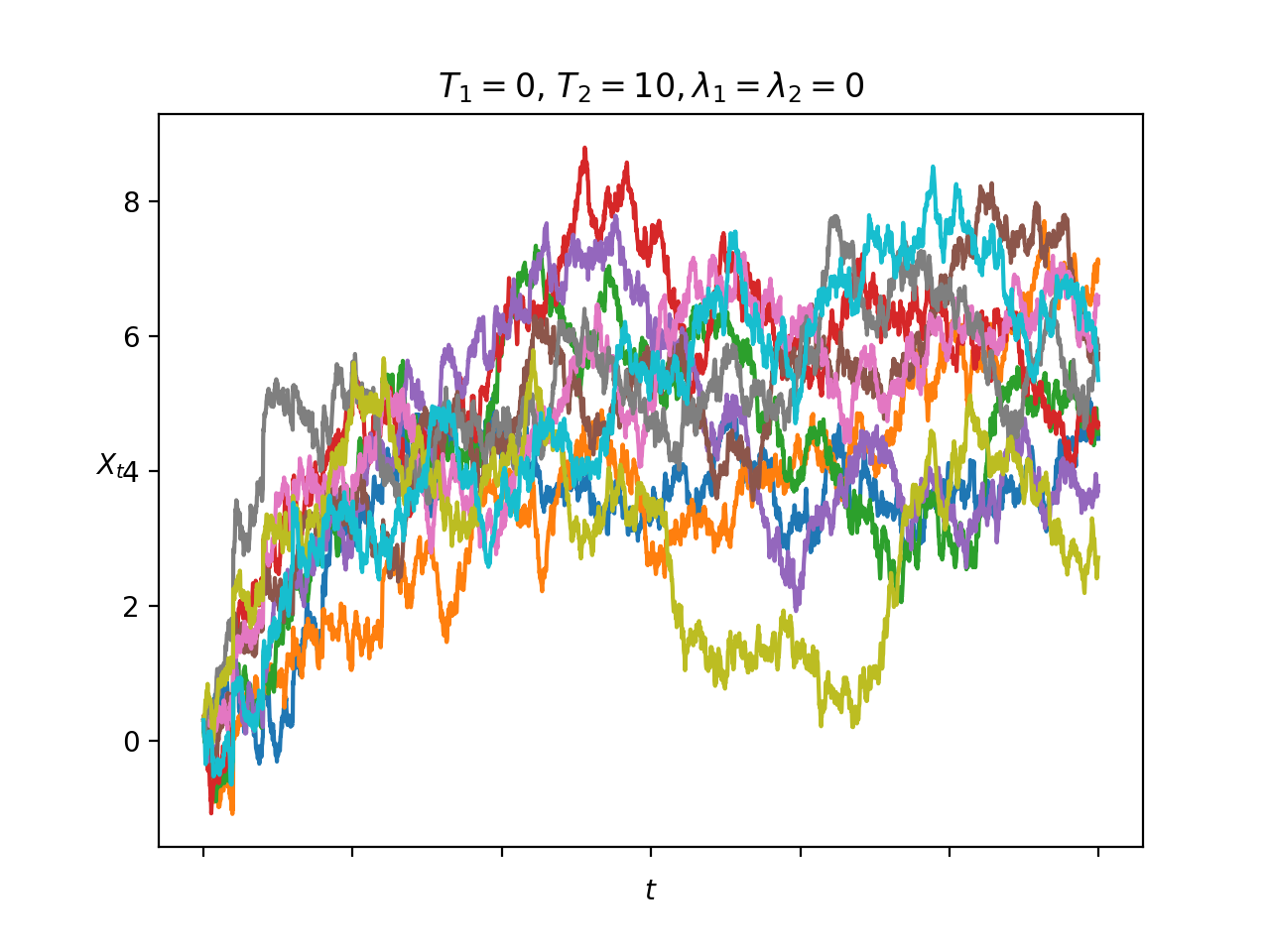}
 }
\subfloat[$\lambda\i=10$]{
    \label{fig_12}
    \includegraphics[width=85mm,height=4.2cm]{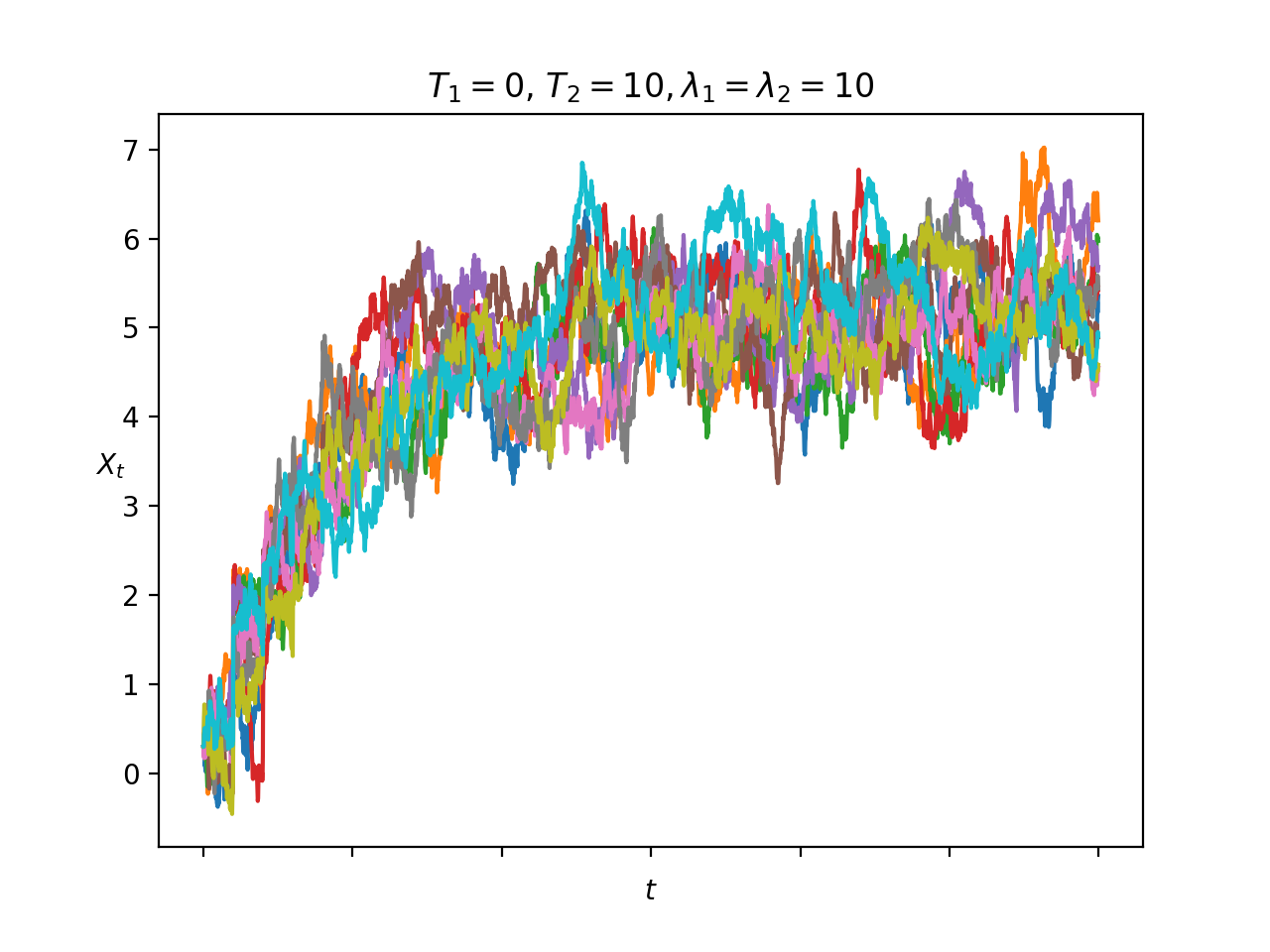}
  }
  
\subfloat[$\lambda\i=100$]{
    \label{fig_21}
    \includegraphics[width=85mm,height=4.2cm]{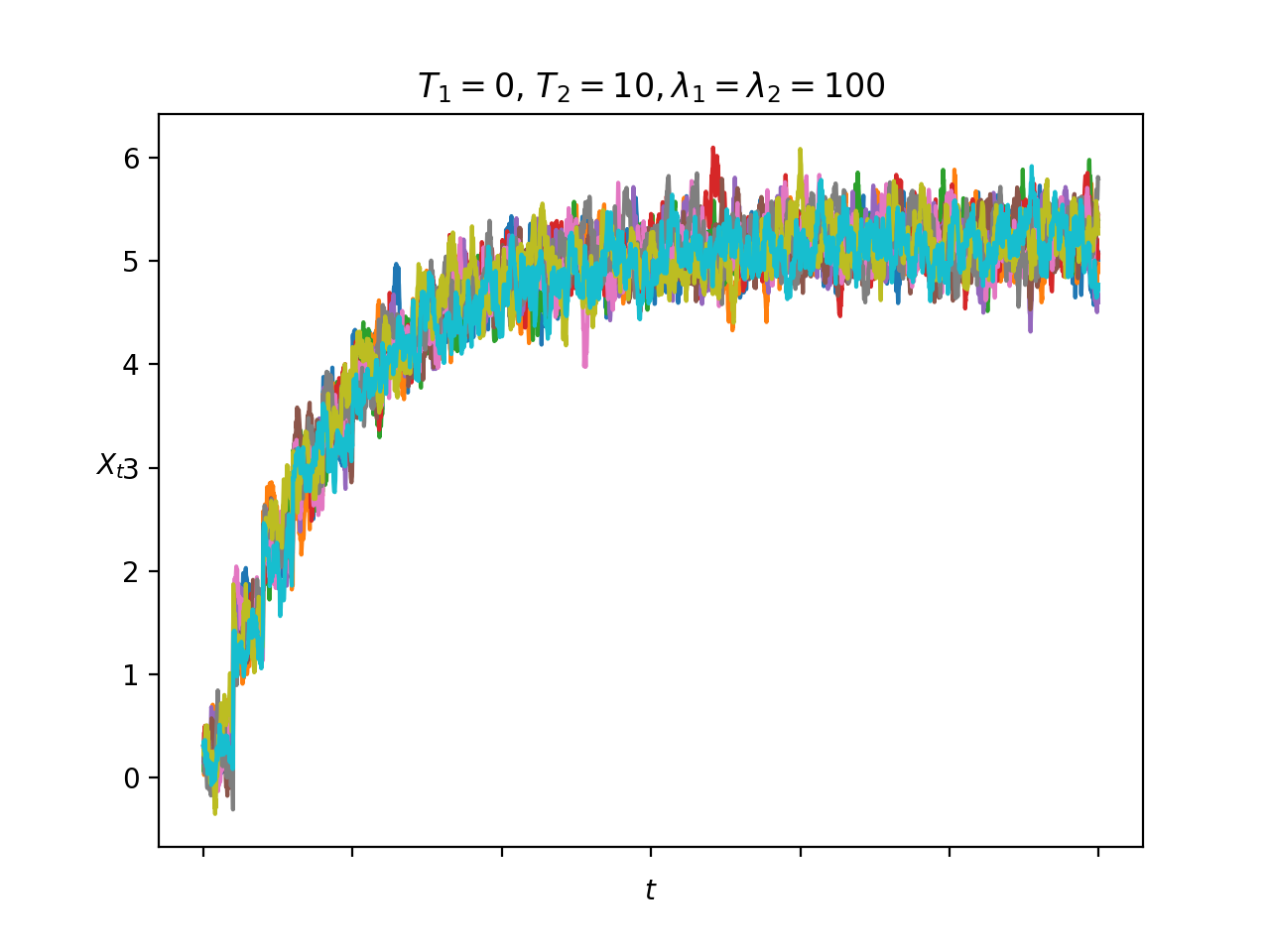}  }
\subfloat[$\lambda\i=500$]{
    \label{fig_22}
    \includegraphics[width=85mm,height=4.2cm]{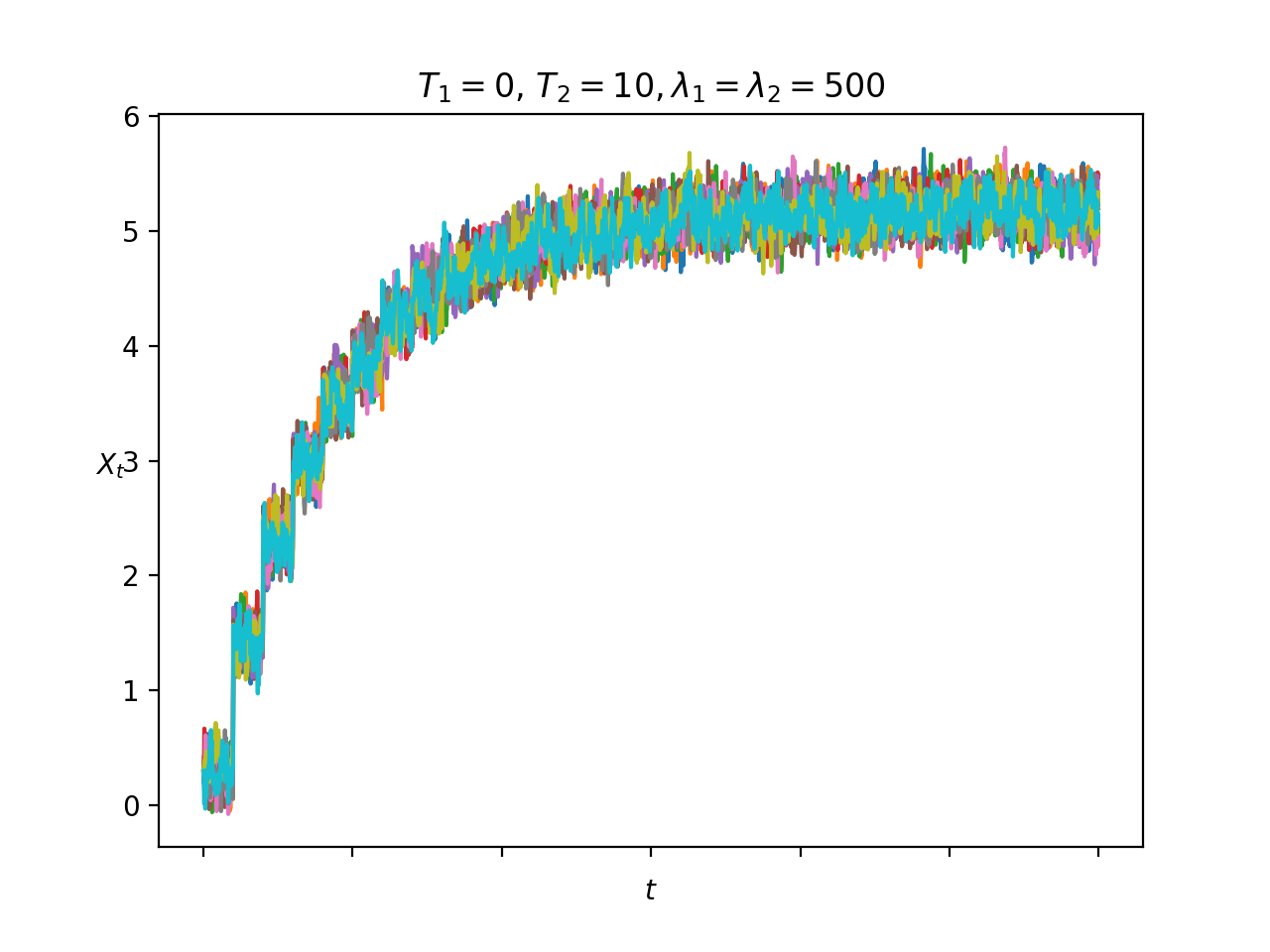}
  }
  \caption{\small{Nash equilibrium with: \\$b_i=\sigma=\delta\i=\theta\i=\xi\i=1, \rho=3, T^1=0, T^2=10$}}
\label{fig_test_variance}
\end{figure}

\begin{figure}[t!]
\centering{
    \includegraphics[width=85mm,height=4.3cm]{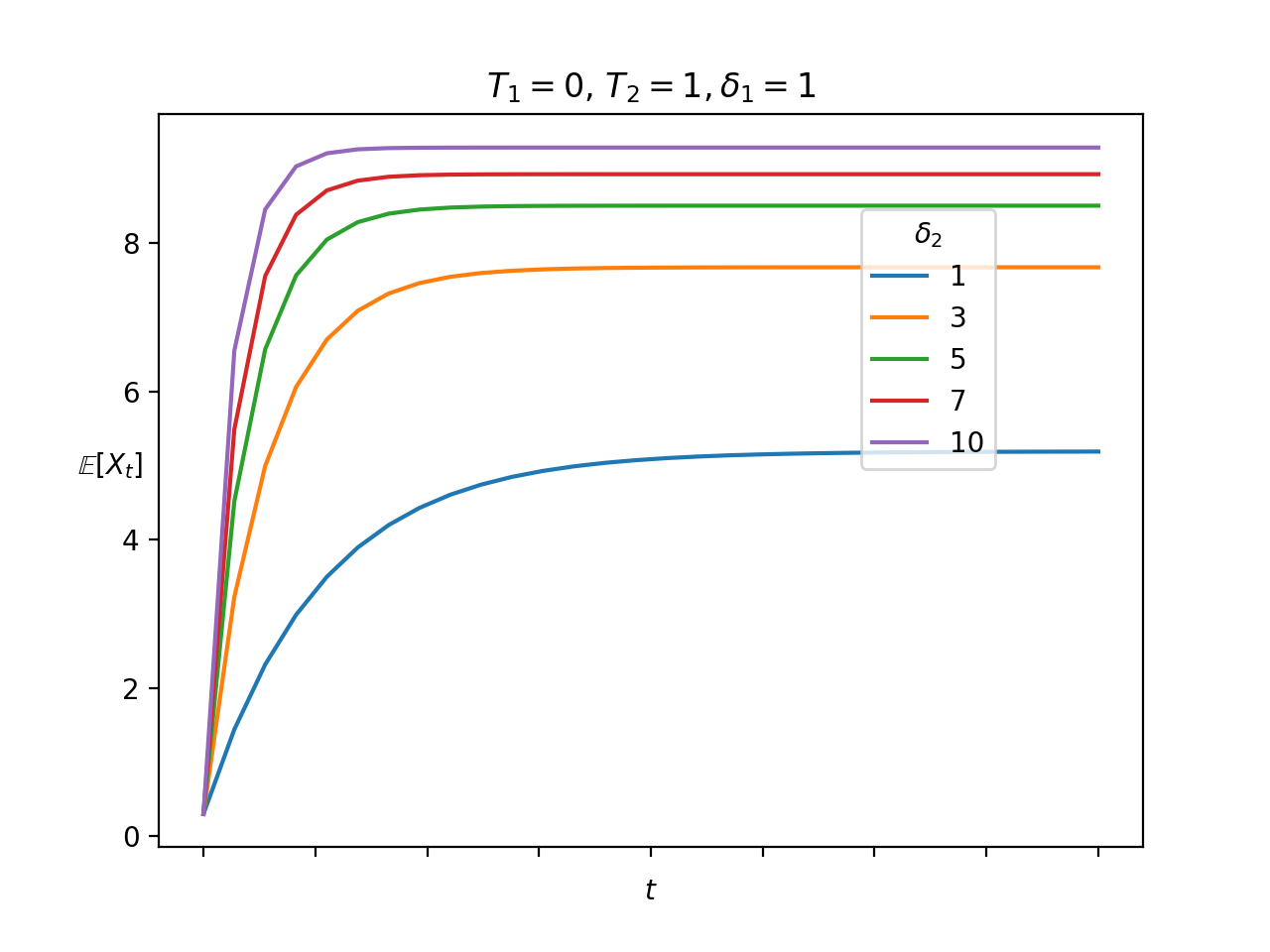}  }
\caption{\small{$t \mapsto \E[X_t]$ \\
$b_i=\sigma=\delta\i=\theta\i=\xi\i=1, \rho=3, T^1=0, T^2=10$}}
\label{fig_mean}
\end{figure}

%%%%%%%%% END: EXAMPLE %%%%%%%%% 

%%%%%%%%% CONTENT %%%%%%%%%
%\input{./functions.tex} % shortcut functions
%\input{./content/introduction.tex}
%\input{./content/settings.tex}
%\input{./content/assumptions.tex}
%\input{./content/martingale_principle.tex}
%\input{./content/computing_a_Nash_equilibrium.tex}
%\input{./content/infinite_horizon.tex}
%\input{./content/extension_multiple_mb.tex}
%\input{./content/example.tex}

\FloatBarrier

\vspace{13mm}

%\small

%\bibliographystyle{plain}

%\bibliography{biblio}

\printbibliography
%\bibliographystyle{unsrt}
%\bibliography{biblio}
\end{document}